\documentclass[12pt]{article}
\usepackage[a4paper,margin=25mm]{geometry}
\usepackage{lmodern}
\usepackage{xcolor,microtype,mathtools,amssymb,amsthm}
\usepackage{xurl}
\usepackage[colorlinks=true,linkcolor=blue!55!black,
  citecolor=blue!55!black,urlcolor=blue!55!black,
  bookmarksnumbered=true,bookmarksopen=true,bookmarksopenlevel=2,
  pdfpagemode=UseOutlines]{hyperref}

\numberwithin{equation}{section}

\newtheorem{theorem}{Theorem}[section]
\newtheorem{proposition}[theorem]{Proposition}
\newtheorem{lemma}[theorem]{Lemma}
\newtheorem{corollary}[theorem]{Corollary}
\newtheorem{conjecture}[theorem]{Conjecture}
\newtheorem{remark}[theorem]{Remark}

\newcommand{\N}{\mathbb N}
\newcommand{\Z}{\mathbb Z}
\newcommand{\psic}{\psi^{\circ}}
\newcommand{\e}{\mathrm e}

\title{\Large\bf Spectral asymptotics of sub-Riemannian Laplacians on compact Heisenberg manifolds}
\author{Sheng-Chen Mao \and Ye Zhang}
\date{}

\begin{document}

\maketitle

\noindent\textbf{Abstract}. Let \(N_M(\lambda)\) be the spectral counting function of the
sub-Laplacian on the compact Heisenberg manifold
\(M=\Gamma\backslash\mathbb H_d\), where $\Gamma$ is a lattice subgroup of the Heisenberg group $\mathbb H_d$. In 2016,  Strichartz  \cite[\textit{J. Geom. Anal.}]{Str16}  proved the Weyl law with
remainder \(R_M(\lambda)=N_M(\lambda)-A_d\operatorname{vol}(M)\lambda^{d+1} = O_M(\lambda^d\log\lambda)\), and conjectured the optimal
remainder to be \(O_M(\lambda^d)\). In this work, we establish a new upper bound and the first two-sided lower bounds
$$
R_M(\lambda)=O_M\!\left(\lambda^d(\log\lambda)^{2/3}\right),
\qquad
R_M(\lambda)=\Omega_{M,\pm}\!\left(\lambda^d\log\log\lambda\right).
$$
As a result, this implies that the sharp polynomial order is $d$, and disproves Strichartz's conjecture.

\medskip
\noindent\textbf{MSC2020.} Primary 35P20, 43A85; Secondary 11N37, 11L07, 58J50

\smallskip
\noindent\textbf{Keywords.} Sub-Riemannian Weyl law; Strichartz conjecture;
Heisenberg manifold; Sub-Laplacian

\section{Introduction}

The principal term in a Weyl law is determined by local geometry,
whereas the sharp remainder may depend on global spectral data.  For
the standard sub-Laplacian on a compact Heisenberg manifold, we prove
that the remainder at order \(\lambda^d\) contains an
unbounded arithmetic factor.  This factor is produced by the exact
representation spectrum and does not appear in the leading heat
asymptotics.

Let \(M=\Gamma\backslash\mathbb H_d\), \(d\geq1\), be a compact
Heisenberg manifold in the normalization of Folland \cite{Fol04}, and
let \(N_M(\lambda)\) count, with multiplicity, the eigenvalues of the
standard sub-Laplacian \(\mathcal L_0\) that do not exceed
\(\lambda\). In 2016, Strichartz \cite{Str16} proved
\begin{equation}\label{e01}
 N_M(\lambda)
 =A_d\operatorname{vol}(M)\lambda^{d+1}
 +O_M(\lambda^d\log\lambda),
\end{equation}
where \(A_d>0\) is explicit.  He also  conjectured the optimal
remainder to be \(O_M(\lambda^d)\) as follows.  
\begin{conjecture}[Strichartz]\label{strc}
For every compact Heisenberg manifold
\(M=\Gamma\backslash\mathbb H_d\),
\[
 R_M(\lambda):=N_M(\lambda)
  -A_d\operatorname{vol}(M)\lambda^{d+1} =O_M(\lambda^d).
\]
\end{conjecture}

Our main results disprove Conjecture~\ref{strc}.  More precisely, we
prove
\begin{equation}\label{e02}
 R_M(\lambda)
 =\Omega_\pm\!\left(\lambda^d\log\log\lambda\right),
 \qquad
 R_M(\lambda)
 =O_M\!\left(\lambda^d(\log\lambda)^{2/3}\right).
\end{equation}
The first assertion includes explicit lower bounds in the normalized
limsup and liminf inequalities.  Hence the remainder takes values of
both signs whose magnitude exceeds the order \(\lambda^d\) by an
unbounded factor.

The exact representation spectrum reduces \(R_M(\lambda)\), up to an
error of order \(O_M(\lambda^d)\), to one weighted sawtooth sum.  The
arithmetic estimates for this sum and their transfer to the spectral
remainder are described in Subsection~\ref{mthd}.

\subsection{Main results}
Let the center of \(\Gamma\) be
\[
 \Gamma\cap Z(\mathbb H_d)=\{(0,0,cn):n\in\mathbb Z\},
 \qquad c>0.
\]
After placing the lattice in the normal form of Folland
\cite[Proposition~2.1]{Fol04}, let \(L\) denote the associated positive
integer.  Equivalently, \(L=L_\Gamma\) is characterized by the spectral
multiplicity \(L|n|^d\) of the representation with nonzero central
frequency \(n\).  In Folland's normalization of the group and Haar
measure,
\begin{equation}\label{e03}
 \operatorname{vol}(M)=Lc^{d+1}.
\end{equation}
Hence the constants below depend only on the normalized quotient.
Define
\begin{equation}\label{e04}
 \kappa_M=\frac{L(c/\pi)^d}{(d-1)!}
 =\frac{\operatorname{vol}(M)}
 {c\pi^d(d-1)!}.
\end{equation}

The first theorem gives the two-sided lower bound for the spectral
remainder.  Its constant follows from the estimates for the integer
and half-integer shifts that occur in the Heisenberg spectrum.  Here
\(\gamma\) denotes Euler's constant.
\begin{theorem}
\label{som}
With \(\kappa_M\) as in \eqref{e04},
\begin{equation}\label{e05}
 \limsup_{\lambda\to\infty}
 \frac{R_M(\lambda)}
      {\lambda^d\log\log\lambda}
 \geq\e^\gamma\kappa_M, \qquad
 \liminf_{\lambda\to\infty}
 \frac{R_M(\lambda)}
      {\lambda^d\log\log\lambda}
 \leq-\e^\gamma\kappa_M.
\end{equation}
Consequently,
\(R_M(\lambda)=\Omega_\pm(\lambda^d\log\log\lambda)\).
\end{theorem}

The second theorem improves the logarithmic exponent in Strichartz's
estimate \eqref{e01}.
\begin{theorem}\label{sup}
For every compact Heisenberg manifold \(M=\Gamma\backslash\mathbb H_d\),
\begin{equation}\label{e06}
 N_M(\lambda)
 =A_d\operatorname{vol}(M)\lambda^{d+1}
 +O_M\!\left(\lambda^d(\log(2+\lambda))^{2/3}\right).
\end{equation}
\end{theorem}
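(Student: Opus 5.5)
The plan is to write out $N_M(\lambda)$ exactly from the representation spectrum. This reduces the remainder, up to $O_M(\lambda^d)$, to a weighted sawtooth sum $\sum_n n\,\psi(\cdot)$. That sum is of the same type as the error term in $\sum_{m\le X}\sigma(m)$, and I would bound it by Walfisz's method, i.e.\ Vinogradov--Korobov exponential sum estimates.

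\emph{Step 1 (exact spectral decomposition).} Decompose $L^2(M)$ into the $n=0$ part and the parts with central frequency $n\neq0$.
\begin{itemize}
\item The $n=0$ part consists of characters of the torus $\Gamma Z(\mathbb H_d)\backslash\mathbb H_d$. There $\mathcal L_0$ acts as a Euclidean Laplacian on a $2d$-torus, so this part contributes $O_M(\lambda^d)$ eigenvalues below $\lambda$.
\item For $n\ne0$ the representation occurs with multiplicity $L|n|^d$. The eigenvalues of $\mathcal L_0$ there are $(2|\alpha|+d)\,a|n|$ for $\alpha\in\N^d$, where $a=\pi/c$ up to Folland's normalization.
\end{itemize}
Since $\#\{\alpha\in\N^d:|\alpha|\le m\}=\binom{m+d}{d}$, this gives
\[
N_M(\lambda)=O_M(\lambda^d)+2L\sum_{1\le n\le \lambda/(ad)} n^d\binom{\lfloor x_n\rfloor+d}{d},\qquad x_n=\tfrac12\Big(\tfrac{\lambda}{an}-d\Big).
\]

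\emph{Step 2 (separating smooth and oscillatory parts).} Write $\lfloor x\rfloor=x-\tfrac12-\psi(x)$ and expand the binomial coefficient, a polynomial of degree $d$, in powers of $\psi(x_n)$.
\begin{itemize}
\item The purely polynomial terms give $\sum_n n^d P(\lambda/n)$ with $P$ an explicit polynomial. By Euler--Maclaurin over $n\le\lambda/(ad)$ this equals $A_d\operatorname{vol}(M)\lambda^{d+1}+O_M(\lambda^d)$. The cutoff is harmless because the summand is smooth and of size $O(\lambda^d)$ near $n\approx\lambda/(ad)$, and the leading constant is checked against \eqref{e03}.
\item Terms containing $\psi(x_n)$ multiplied by $x_n^{d-j}$ with $j\ge2$ are bounded trivially by $\sum_n n^d(\lambda/n)^{d-2}\ll\lambda^d$.
\item The only dangerous term is
\[
-\frac{2L}{a^{d-1}2^{d-1}(d-1)!}\,\lambda^{d-1}\sum_{n\le \lambda/(ad)} n\,\psi\!\Big(\frac{\lambda}{2an}-\frac d2\Big),
\]
again up to $O(\lambda^d)$.
\end{itemize}
So Theorem~\ref{sup} reduces to proving $\sum_{n\le cX} n\,\psi(X/n-\delta)\ll X^2(\log X)^{2/3}$ for $\delta\in\{0,\tfrac12\}$, with $X\asymp\lambda$.

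\emph{Step 3 (the arithmetic estimate).} This is the main obstacle. By partial summation it suffices to show $\sum_{n\le y}\psi(X/n-\delta)\ll X(\log X)^{2/3}$ uniformly for $y\le cX$. I will treat the ranges of $n$ separately.
\begin{itemize}
\item For $n\le X^{1/2}$ the trivial bound is enough.
\item For large $n$, close to $X$, the values $X/n$ are small. I would switch to the dual variable: count lattice points under the hyperbola, using the Dirichlet hyperbola identity with the shift $\delta$. This converts those $n$ into a sum over small $t=X/n$, whose contribution is $O(X)$.
\item For the middle range, expand $\psi$ in a truncated Fourier series (Vaaler) with $H$ terms. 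This gives exponential sums $\sum_{n\sim N}e(hX/n)$; the shift $\delta$ only contributes a unimodular factor $e(-h\delta)$. Apply the Vinogradov--Korobov bound
\[
\sum_{n\sim N}e(hX/n)\ll N\exp\!\Big(-c\,\frac{(\log N)^3}{(\log X)^2}\Big)\quad\text{for } \exp\big((\log X)^{2/3}\big)\le N\le X^{1/2}.
\]
\item The dyadic blocks with $N<\exp(C(\log X)^{2/3})$ are estimated trivially. They are what produces the loss $(\log X)^{2/3}$, exactly as in Walfisz's treatment of $\sum_{m\le X}\sigma(m)$.
\end{itemize}
The point to check carefully is that the shift $\delta=\tfrac12$ and the weight $n$ do not spoil Walfisz's argument. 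My first attempt would be to note that after the hyperbola transformation the weighted sum equals $\sum_{m\le X}\sigma_\delta(m)$ for a twisted divisor sum, minus its main term. I would then quote Walfisz's bound in the form proved for arithmetic progressions, since $\delta=\tfrac12$ amounts to restricting to odd multiples.
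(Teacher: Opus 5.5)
There is a genuine gap. It sits in Step~2 and in the normalization of Step~3.

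\textbf{Step~2 fails at leading order.} Because you sum over the central frequency $n$ first, the range $n\asymp\lambda$ has $x_n=O(1)$ and $n^d\asymp\lambda^d$. Expanding $\binom{\lfloor x_n\rfloor+d}{d}$ in powers of $x_n$ and $\psi(x_n)$ therefore gains nothing there. Your ``trivial'' bound is really $\sum_{n\ll\lambda}n^d(\lambda/n)^{d-2}=\lambda^{d-2}\sum_{n\ll\lambda}n^2\asymp\lambda^{d+1}$, not $\ll\lambda^d$. For $d=2$, for instance, the nonnegative term $\tfrac12 n^2\psi(x_n)^2$ really is of order $\lambda^3$.

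Worse, the $\psi$-terms carry part of the leading coefficient. Take $d=1$, where there are no higher terms. With $a=\pi/(2c)$ in your notation and $T=c\lambda/\pi$, one has $x_n=T/n-\tfrac12$. Your polynomial part is then $2L\sum_{n\le 2T}n\cdot\frac Tn\sim 4LT^2=\frac{4}{\pi^2}Lc^2\lambda^2$. By \eqref{e03} and \eqref{e15}, however, $A_1\operatorname{vol}(M)\lambda^2=\tfrac12 Lc^2\lambda^2$. The missing $(\tfrac12-\tfrac4{\pi^2})Lc^2\lambda^2$ comes from $-2L\sum_{n\le 2T}n\,\psi(T/n-\tfrac12)$, which is therefore of exact order $T^2$. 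Similarly, $\sum_{n\le X}n\psi(X/n)=(\tfrac34-\tfrac{\pi^2}{12})X^2+O(X\log X)$. So no bound $o(X^2)$ holds for your weighted sum; a main term must be extracted and matched with the Weyl term.

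\textbf{Step~3 has the wrong scale.} The target $X^2(\log X)^{2/3}$, multiplied by your prefactor $\lambda^{d-1}$, gives $\lambda^{d+1}(\log\lambda)^{2/3}$, which exceeds the main term. The partial-summation target $\sum_{n\le y}\psi(X/n-\delta)\ll X(\log X)^{2/3}$ is just $|\psi|\le\tfrac12$. As set up, Step~3 carries no information.

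\textbf{Where the difficulty actually lies.} You need the exact order-$\lambda^{d+1}$ main terms plus a fluctuation bound $\lambda^d(\log\lambda)^{2/3}$. If you split hyperbolically at $n\approx\sqrt T$, the range $n\le\sqrt T$ is harmless: the $n$-weighted sum is trivially $O(T)$, giving $O(\lambda^d)$. All the difficulty is in the range $n>\sqrt T$, which you dismissed as $O(X)$. In the hyperbola identity that range is counted as $2L\sum_{j\le\sqrt T}a_d(j)\sum_{n\le T/(j+d/2)}n^d$. Euler--Maclaurin in $n$ turns its fluctuation into $-\frac{2L}{(d-1)!}T^d\sum_{j\le\sqrt T}\frac1j\psi\bigl(\frac{T}{j+d/2}\bigr)+O(T^d)$, which equals $-\frac{2L}{(d-1)!}T^dS_{d/2}(T)+O(T^d)$. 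This is not $O(T^d)$: by Theorem~\ref{tom} it is $\Omega_\pm(T^d\log\log T)$, and an $O(T^d)$ bound would be Conjecture~\ref{strc}.

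The paper avoids all of this by summing over the Landau index $j$ first (Proposition~\ref{pred}). There the leading parts of $\sum_{n\le T/(j+r)}n^d$ sum directly to $A_d\operatorname{vol}(M)\lambda^{d+1}$ via the series in \eqref{e15}. The remainder is $-\frac{2L}{(d-1)!}T^dS_{d/2}(T)+O_M(\lambda^d)$. The factor $(\log T)^{2/3}$ then comes from Walfisz's estimate for $1/j$-weighted sawtooth sums in progressions (Lemma~\ref{lwal}), applied after $j\mapsto 2j+d$ (Lemma~\ref{lrat}).

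Parts of your Step~3 describe that $1/j$-weighted dual sum, not the $n$-weighted sum you set up. This includes your Vinogradov--Korobov sketch, where the trivially bounded blocks $j\le\exp(C(\log X)^{2/3})$ cost $\sum 1/j\ll(\log X)^{2/3}$, and your remark on odd progressions. With weight $n$ or no weight, those small blocks would not produce a $(\log X)^{2/3}$ loss.
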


About these two results, we have the following remark.
\begin{remark}
(1) Theorems \ref{som}  disproves Strichartz's Conjecture  \ref{strc}, which therefore  forbids the classical two-term expansion of Weyl law for $N_M$. Nevertheless, combining  it with Theorem  \ref{sup}, we can deduce that the sharp polynomial order for the Weyl remainder is exactly $d$.

(2) With \(\mu=\sqrt\lambda\) and \(Q=2d+2\), the two spectral theorems
read
\[
 R_M(\mu^2)
 =O_M\!\left(\mu^{Q-2}(\log\mu)^{2/3}\right),
 \qquad
 R_M(\mu^2)
 =\Omega_\pm\!\left(\mu^{Q-2}\log\log\mu\right).
\]
Thus the remainder occurs at the first lower homogeneous power below
the principal Weyl term \(\mu^Q\), multiplied by an unbounded factor.
\end{remark}

\subsection{Proof strategy}
\label{mthd}

Both the proofs of Theorems  \ref{som} and  \ref{sup} reduce the spectral remainder to a weighted sawtooth
sum.  We denote  the usual sawtooth function by
\[
 \psi(u)=u - \lfloor u\rfloor -\frac12
\]
 and given $T\ge2$ we define
\begin{equation}\label{e07}
 S_r(T)=\sum_{1\leq m\leq T}\frac1m
 \psi\left(\frac{T}{m+r}\right),
 \qquad r\geq0.
\end{equation}
The spectral reduction for \(R_M(\lambda)\) in \eqref{e14}, stated in
Proposition~\ref{pred}, reduces Theorems~\ref{sup} and~\ref{som} to
Theorems~\ref{tup} and~\ref{tom}, respectively.  In particular, the
coefficient and sign in the reduction \eqref{e14} give the constants
and signs in the spectral bounds \eqref{e05}.

For the upper bound, we reduce the rationally shifted sum \(S_r(T)\)
in \eqref{e07} to Walfisz's estimate on an arithmetic progression.  The
lower bound uses two complementary arguments.  For an integer \(n\),
the unshifted sum has the jump
\[
 S_0(n)-S_0(n^-)
 =-\sigma_{-1}(n)+\frac1{2n},
 \qquad
 \sigma_{-1}(n)=\sum_{a\mid n}\frac1a.
\]
Because \(\sigma_{-1}(n)=\sigma(n)/n\), Gronwall
\cite{Gro13} proved
\[
 \limsup_{n\to\infty}
 \frac{\sigma_{-1}(n)}{\log\log n}=\e^\gamma;
\]
see also Hardy and Wright \cite[Chapter~18]{HW08}.  A large jump by
itself gives no information about the sign of the midpoint.  To obtain
both signs with the stronger constant needed for the spectral problem,
we use P\'etermann's theorem \cite{Pet87,Pet88} for
\[
 E_{-1}(x)=\sum_{n\leq x}\sigma_{-1}(n)
 -\zeta(2)x+\frac12\log x.
\]
The identity for \(S_0(x)\) in \eqref{e35} gives
\(S_0(x)=-E_{-1}(x)+O(1)\).  Changes of index then transfer
P\'etermann's constants to every integer and half-integer shift.  Since
the spectral shift is \(d/2\), these estimates and the reduction for
\(R_M(\lambda)\) in \eqref{e14} yield the constants in
Theorem~\ref{som}.

For a general nonnegative rational shift, we prove a separate
two-sided estimate.  We assign the midpoint value to the sawtooth at
each integer and show that the range \(m>\sqrt T\) contributes
\(O_r(1)\).  If \(r=A/B\) and \(T=nQ\), the remaining summands are
periodic in \(n\) and have mean zero over a complete period.  Averaging
over \(1\leq n\leq Q^3\) controls the midpoint, while a common jump at
a suitable least common multiple gives both signs.  This argument
proves the constants in \eqref{e09}; P\'etermann's theorem gives the
stronger constants in \eqref{e09b} for the spectral shifts.

The direct argument applies to the shifted sawtooth sum \(S_r(T)\)
in \eqref{e07} and uses exact mean zero on the rational orbits
determined by \(r=A/B\).  Adhikari, Balasubramanian, and
Sankaranarayanan \cite{ABS} used an Erd\H{o}s--Shapiro construction to
prove a related one-sided \(\Omega_+\) estimate for the four-square
remainder.  The argument here treats every fixed nonnegative rational
shift and obtains both signs.  It is distinct from the amplification
method of Soundararajan \cite{Sou03} and the sectorial kernel
resonance method \cite{Lam26}, which use long trigonometric
polynomials; our lower bound follows from a simultaneous
discontinuity.

\subsection{Backgrounds}

We first compare the result with elliptic Weyl remainders, which
already encode global information in the
elliptic theory.  For a closed \(n\)-dimensional Riemannian manifold,
the local spectral function theorem of H\"ormander \cite{Hor68}
gives
\[
 N_g(\lambda)
 =\frac{\omega_n}{(2\pi)^n}\operatorname{vol}_g(X)\lambda^{n/2}
 +O_g\!\left(\lambda^{(n-1)/2}\right);
\]
Duistermaat and Guillemin \cite{DG75} related the wave
trace to closed geodesics and periodic bicharacteristics, while
Canzani and Galkowski \cite{CG23} obtained logarithmic
improvements by controlling beams associated with nearly periodic
geodesics.  Thus the principal Weyl coefficient is
local, but its sharp remainder is governed by global recurrence.

The closest elliptic comparison is the Laplace--Beltrami operator of
a left-invariant Riemannian metric on a compact Heisenberg manifold.
Petridis and Toth \cite{PT02} obtained an upper bound for
arithmetic metrics and
formulated the conjectural scale
\(\lambda^{3/4+\varepsilon}\); Chung, Petridis, and Toth
\cite{CPT03} treated arbitrary left-invariant metrics
through exponent pairs and exponential sum estimates.  Khosravi and Petridis
\cite{KP05}, Khosravi and Toth \cite{KT05},
Zhai \cite{Zha08}, and Nowak \cite{Now09,No09b} subsequently
established formulas for the mean square, power moments, distribution
results, and \(\Omega\) bounds.

The two problems share the same representation structure, but their
energy laws differ.  In the Riemannian problem the
non-Abelian levels contain a quadratic vertical term in the central
frequency, and continuous oscillation is analyzed by exponent pair
methods.  For the sub-Laplacian the vertical square is
absent: the energy is
\[
 \frac{\pi|n|}{c}\left(j+\frac d2\right),
\]
the Weyl exponent is \(Q/2=d+1\), and the relevant argument of the
sawtooth is \(T/(m+d/2)\).  The lower bound for \(R_M(\lambda)\) in
\eqref{e02} follows from simultaneous discontinuities, not from the
continuous oscillation that determines the Riemannian remainder.
Hence our theorem neither improves nor contradicts the Riemannian
conjecture; the two problems have different sources of spectral
oscillation on the same nilmanifold.

For the broader sub-Riemannian context, let
\((X,\mathcal H,g_{\mathcal H})\) be a compact bracket-generating
sub-Riemannian manifold and
\[
 \mathcal L_{\mathcal H}=\sum_{\nu=1}^{k}X_\nu^*X_\nu
\]
a sub-Laplacian associated with a smooth volume.  Its principal
symbol vanishes on the characteristic cone \(\mathcal H^\perp\), so
the elliptic phase-space argument no longer applies.  H\"ormander's
condition nevertheless gives hypoellipticity and a discrete spectrum.
The foundational spectral estimates are due to M\'etivier
\cite{Met76} and Menikoff--Sj\"ostrand
\cite{MS78}.  Ponge \cite{Pon08} developed the
Heisenberg calculus as a geometric framework for contact and CR
operators, and Dave and Haller \cite{DH20} proved complete heat
kernel expansions for positive Rockland operators on filtered
manifolds.

In the equiregular case the leading exponent is \(Q/2\), where \(Q\)
is the homogeneous dimension.  Colin de Verdi\`ere, Hillairet, and
Tr\'elat \cite[Theorem~4.1]{VHT22} identify the leading coefficient
through the heat kernel of
the nilpotent approximation and obtain
\[
 \operatorname{Tr}(\e^{-t\mathcal L_{\mathcal H}})
 \sim a_0t^{-Q/2},
 \qquad
 N_{\mathcal H}(\lambda)\sim
 \frac{a_0}{\Gamma(Q/2+1)}\lambda^{Q/2}.
\]
Their local and microlocal Weyl laws
show that the high-energy mass is governed by the nilpotentized
geometry and may concentrate on characteristic directions.  In the
three-dimensional contact case, Colin de Verdi\`ere, Hillairet, and
Tr\'elat \cite{VHT18} proved that quantum limits supported on the
characteristic cone are invariant under the lifted Reeb flow and that
ergodicity of that flow yields quantum ergodicity.  These results
describe the leading density and
averaged microlocal distribution; they do not control the pointwise
oscillation of the unsmoothed counting function.

At the level of graded Lie groups, the nilpotent tangent at each point
of a contact sub-Riemannian manifold is isomorphic to a Heisenberg
group.  The horizontal metric carried by this tangent group, however,
need not reduce to the standard isotropic metric used in this paper
when \(d\geq2\).  Compact quotients equipped with the standard
sub-Laplacian are therefore explicitly diagonalizable model cases,
rather than a universal metric normal form for all contact
sub-Laplacians.  They
nevertheless provide a natural setting in which to determine how
global topology and arithmetic affect the local hypoelliptic Weyl
law.  Their topological dimension is \(2d+1\), whereas
\[
 Q=2d+2,\qquad N_M(\lambda)\asymp\lambda^{Q/2}
 =\lambda^{d+1}.
\]
With \(\mu=\sqrt\lambda\), Strichartz's conjecture reads
\[
 N_M(\mu^2)
 =A_d\operatorname{vol}(M)\mu^Q+O_M(\mu^{Q-2}).
\]
The scale \(\mu^{Q-2}=\lambda^d\) is the first homogeneous order below
the leading order \(\mu^Q\) and is also the size of the
Abelian spectral contribution.  Our theorem shows that the
non-Abelian remainder contains the additional unbounded factor
\(\log\log\mu\).  Thus the homogeneous dimension and the local
nilpotent model determine the leading term but not the sharp
remainder.

This model can be analyzed directly because nilpotentization is exact
globally, characteristic directions are quantized by central Fourier
modes, and the resulting Landau levels have arithmetic
multiplicities.  The exact representation spectrum contains global
information that is absent from heat and Ces\`aro asymptotics.
In the terminology of Colin de Verdi\`ere, Hillairet, and Tr\'elat
\cite{VHT18}, this model gives an explicit quantization of the
characteristic component, although its Reeb flow is periodic rather
than ergodic.

Results for more general nilpotent models also distinguish local
smoothed asymptotics from global spectral data.  Bauer,
Furutani, Iwasaki, and Laaroussi \cite{BFIL} diagonalize sub-Laplacians
on pseudo
\(H\)-type nilmanifolds by central Fourier modes and construct large
families that are sub-Laplacian isospectral but mutually
non-homeomorphic; they also give examples in different dimensions with
heat traces agreeing modulo \(O(t^\infty)\).  Fischer \cite{Fis22}
shows that, on compact graded nilmanifolds, local spectral multiplier
asymptotics for positive Rockland operators can contain only one
nontrivial term.  In both settings, smoothed local data may have a
particularly simple form while the unsmoothed counting function
retains global information.  The two-sided \(\log\log\lambda\)
fluctuation proved here makes this distinction explicit for the basic
contact model and leads to an analogous arithmetic remainder problem
on compact two-step nilmanifolds.

For non-equiregular structures, the pointwise homogeneous dimension
\(\nu(x)\) need not be constant.  Hua Chen and Hong-Ge Chen
\cite[Theorems~1.1--1.3 and Remark~1.1]{CC19} set
\(\widetilde\nu=\max_{x\in X}\nu(x)\) and
\(H=\{x\in X:\nu(x)=\widetilde\nu\}\).  For a self-adjoint sum of
squares on a compact manifold without boundary, they prove a diagonal
heat kernel bound of order \(t^{-\widetilde\nu/2}\), the eigenvalue
lower bound \(\lambda_k\gtrsim k^{2/\widetilde\nu}\), and the counting
asymptotic
\[
 \lim_{\lambda\to\infty}
 \lambda^{-\widetilde\nu/2}N(\lambda)
 =\frac{1}{\Gamma(\widetilde\nu/2+1)}
   \int_H c_0(x)\,dx.
\]
In particular, when \(H\) has positive measure, this gives
\(\lambda_k\sim C k^{2/\widetilde\nu}\).  Hua Chen, Hong-Ge Chen,
and Jin-Ning Li \cite[Theorems~1.1 and~1.2]{CCL22} complement these
results with upper bounds for the closed eigenvalue problem.  Their
general inequality is expressed in terms of the volume of subunit
balls; under the same hypothesis \(\lvert H\rvert>0\), they obtain
\(\lambda_k\lesssim k^{2/\widetilde\nu}\) by the Rayleigh--Ritz
principle and heat kernel estimates.  For the equiregular Heisenberg
manifolds considered here, \(\widetilde\nu=Q=2d+2\) and \(H=X\), so
these general results recover the polynomial eigenvalue scale
\(\lambda_k\asymp k^{1/(d+1)}\).  They do not give a remainder for the
unsmoothed counting function.  The present result addresses that
question and shows that the term of order \(\lambda^d\) contains a
two-sided arithmetic oscillation.

Fan, Kim, and Zeytuncu \cite{FKZ22} obtained the leading Weyl law for
the larger family \(\mathcal L_\alpha\), including the Kohn Laplacians,
by Tauberian methods.  Their result determines the principal
coefficient but does not estimate the unsmoothed remainder considered
here.  The elliptic Laplace--Beltrami problem on a Riemannian
Heisenberg manifold likewise has different eigenvalue scales and a
different lattice point problem.

The paper is organized as follows.  In Section~\ref{spre}, we fix the
spectral normalization and translate Strichartz's reduction into the
notation used here.  In Section~\ref{ptup}, we prove the arithmetic
upper bound in Theorem~\ref{tup} and deduce Theorem~\ref{sup}.  In
Section~\ref{ptom}, we prove the two-sided arithmetic estimates in
Theorem~\ref{tom} and deduce Theorem~\ref{som}.

\section{Preliminaries}
\label{spre}

We fix the spectral normalization required to reduce the counting
remainder \(R_M(\lambda)\) to the sawtooth sum \(S_{d/2}(T)\) in
\eqref{e07}.  The reduction in \eqref{e14} is the only spectral result
used in Sections~\ref{ptup} and~\ref{ptom}.  Folland
\cite[Theorem~3.2]{Fol04} gives the non-Abelian
eigenvalues
\begin{equation}\label{e10}
 \frac{\pi |n|}{2c}(d+2j)
 =\frac{\pi |n|}{c}\left(j+\frac d2\right),
 \qquad j\in\Z_{\geq0},\quad n\in\Z\setminus\{0\},
\end{equation}
with multiplicity
\begin{equation}\label{e11}
 L|n|^d\binom{j+d-1}{d-1}.
\end{equation}
The remaining Abelian eigenvalues form the spectrum of a flat torus
of dimension \(2d\).  In particular, all constants in this section
use the volume normalization \eqref{e03}; we do not rescale the
central variable after the eigenvalue formula \eqref{e10}.

Set
\begin{equation}\label{e12a}
 T=\frac{c\lambda}{\pi},\qquad
 r=\frac d2,\qquad
 a_d(j)=\binom{j+d-1}{d-1}.
\end{equation}
By the eigenvalue and multiplicity formulas \eqref{e10}--\eqref{e11},
the non-Abelian counting function \(N_{\mathrm{na}}(\lambda)\) is
exactly
\begin{equation}\label{e12}
N_{\mathrm{na}}(\lambda)
=2L\sum_{\substack{j\geq0\\j+r\leq T}}
 a_d(j)\sum_{1\leq n\leq T/(j+r)}n^d.
\end{equation}

The following proposition is Strichartz's spectral reduction, written
in the normalization fixed above.
\begin{proposition}[Strichartz]
\label{pred}
Let
\[
 R_M(\lambda)
 =N_M(\lambda)-A_d\operatorname{vol}(M)\lambda^{d+1}.
\]
Then, as \(\lambda\to\infty\),
\begin{equation}\label{e14}
 R_M(\lambda)
 =-\frac{2L}{(d-1)!}
 \left(\frac{c\lambda}{\pi}\right)^d
 S_{d/2}\left(\frac{c\lambda}{\pi}\right)
+O_M(\lambda^d).
\end{equation}
Moreover,
\begin{equation}\label{e15}
\begin{split}
A_d
&=\frac{2}{(d+1)\pi^{d+1}}
 \sum_{j=0}^{\infty}
 \frac{\binom{j+d-1}{d-1}}{(j+d/2)^{d+1}}\\
&=\frac{2}{\pi^{d+1}\Gamma(d+2)}
 \int_{-\infty}^{\infty}
 \left(\frac{x}{\sinh x}\right)^d\,dx .
\end{split}
\end{equation}
\end{proposition}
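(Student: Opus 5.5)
The plan is to split $N_M(\lambda)=N_{\mathrm{ab}}(\lambda)+N_{\mathrm{na}}(\lambda)$ and treat each piece separately.

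\emph{Abelian part.} The Abelian eigenvalues are those of a flat $2d$-torus. The standard lattice-point bound gives $N_{\mathrm{ab}}(\lambda)=O_M(\lambda^{d})$, so this part is absorbed into the error in \eqref{e14}.

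\emph{Non-Abelian part, inner sum.} I start from the exact formula \eqref{e12}, with $T=c\lambda/\pi$ and $r=d/2$ as in \eqref{e12a}. For the inner sum I use Euler--Maclaurin in the form
\[
 \sum_{1\le n\le X}n^d=\frac{X^{d+1}}{d+1}-\psi(X)X^d+O_d\bigl(X^{d-1}\bigr),\qquad X\ge1 ,
\]
which holds uniformly in $X\ge1$ (for $d=1$ the error is $O(1)$). I insert this with $X=T/(j+r)$, which is $\ge 1$ exactly on the range $j+r\le T$. This produces three sums over $0\le j\le T-r$:
\begin{itemize}
\item a main term $\frac{2L}{d+1}T^{d+1}\sum_j a_d(j)(j+r)^{-d-1}$;
\item a sawtooth term $-2LT^d\sum_j a_d(j)(j+r)^{-d}\psi\bigl(T/(j+r)\bigr)$;
\item an error term $O\bigl(T^{d-1}\sum_{j\le T}a_d(j)(j+r)^{1-d}\bigr)$.
\end{itemize}
Since $a_d(j)\asymp (j+1)^{d-1}$, the error term is $O(T^{d-1}\cdot T)=O(T^d)$.

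\emph{Main term.} I extend the $j$-sum to infinity. The tail is $\sum_{j>T-r}j^{d-1}j^{-d-1}\ll T^{-1}$, so extending costs $O(T^d)$. Using $\operatorname{vol}(M)=Lc^{d+1}$ from \eqref{e03}, the main term becomes exactly $A_d\operatorname{vol}(M)\lambda^{d+1}$, with $A_d$ given by the first line of \eqref{e15}.

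\emph{Sawtooth term.} The key observation is
\[
 a_d(j)=\frac{1}{(d-1)!}\prod_{i=1}^{d-1}(j+i),
\]
and the shifts $i-d/2$ are symmetric about $0$. Hence
\[
 a_d(j)=\frac{(j+r)^{d-1}}{(d-1)!}+O\bigl((j+r)^{d-3}\bigr),
\]
although the weaker bound $O((j+r)^{d-2})$ already suffices. Since $|\psi|\le1/2$, the error in this expansion contributes $O\bigl(T^d\sum_j (j+r)^{-2}\bigr)=O(T^d)$. The sawtooth term therefore equals
\[
 -\frac{2L}{(d-1)!}\,T^d\sum_{0\le j\le T-r}\frac{\psi(T/(j+r))}{j+r}+O(T^d).
\]
To reach $S_r(T)$ I make three changes, each costing $O(T^d)$:
\begin{itemize}
\item drop the $j=0$ term, which is $O(1)$ inside the sum;
\item rename $j=m$ and replace $1/(m+r)$ by $1/m$, at cost $\sum_m O(m^{-2})$;
\item adjust the summation range from $m\le T-r$ to $m\le T$, which involves $O(1)$ terms of size $O(1/T)$.
\end{itemize}
This gives \eqref{e14}.

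\emph{Integral formula for $A_d$.} I write
\[
 (j+r)^{-d-1}=\frac{1}{d!}\int_0^\infty t^d e^{-(j+r)t}\,dt
\]
and use $\sum_j a_d(j)e^{-jt}=(1-e^{-t})^{-d}$. Interchanging sum and integral (legitimate, since everything is positive) gives
\[
 \sum_j\frac{a_d(j)}{(j+r)^{d+1}}=\frac{1}{d!}\int_0^\infty\Bigl(\frac{t}{2\sinh(t/2)}\Bigr)^d dt .
\]
The substitution $t=2x$, together with evenness of $(x/\sinh x)^d$, turns this into $\frac{1}{d!}\int_{-\infty}^{\infty}(x/\sinh x)^d\,dx$. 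Multiplying by $2/((d+1)\pi^{d+1})$ yields the second line of \eqref{e15}.

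\emph{Main obstacle.} The only delicate point is bookkeeping at order exactly $T^d$: every discarded piece must be uniformly $O(T^d)$ and not $O(T^d\log T)$. Two places need care here.
\begin{itemize}
\item The Euler--Maclaurin error must be $O(X^{d-1})$ uniformly for $X\ge1$; an error of the form $O(X^d)$ would lose a $\log T$.
\item The replacement of $a_d(j)/(j+r)^d$ by $1/((d-1)!(j+r))$ must leave a summable remainder $O((j+r)^{-2})$, not $O((j+r)^{-1})$. This is exactly where the choice $r=d/2$ and the symmetric structure of $a_d$ enter.
\end{itemize}
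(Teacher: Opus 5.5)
Your proposal is correct. From \eqref{e16} onward it coincides with the paper's proof: the weight replacement as in \eqref{e17}, the removal of $j=0$ and the endpoint adjustment as in \eqref{e18}, and the Mellin representation followed by $t=2x$ for the second expression of $A_d$.

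The difference is upstream. The paper does not prove \eqref{e16}; it imports it from Strichartz and only translates his notation into Folland's normalization. You instead derive \eqref{e16} from the exact count \eqref{e12}, using:
\begin{itemize}
\item the flat-torus bound $N_{\mathrm{ab}}(\lambda)=O_M(\lambda^d)$;
\item Euler--Maclaurin for $\sum_{n\le X}n^d$, with the $-\psi(X)X^d$ term kept explicit and the remainder $O_d(X^{d-1})$ uniform in $X\ge1$.
\end{itemize}
Your version is self-contained and shows exactly why every discarded piece costs $O(\lambda^d)$ rather than $O(\lambda^d\log\lambda)$. The paper's version is shorter but leaves the reader to check Strichartz's computation.

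One correction to your closing remark: the symmetry of the shifts $i-d/2$ is not needed anywhere in the reduction. The bound
\[
 a_d(j)=\frac{(j+r)^{d-1}}{(d-1)!}+O_{d,r}\bigl((j+r)^{d-2}\bigr)
\]
holds for any fixed shift $r>0$, and it is all that the replacement \eqref{e17} requires. The value $r=d/2$ genuinely matters only in the Mellin step, through the identity $e^{-dt/2}(1-e^{-t})^{-d}=(2\sinh(t/2))^{-d}$.
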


\begin{proof}
Strichartz \cite[p.~2455]{Str16} proves the reduction for
\(R_M(\lambda)\) in \eqref{e14}.  We only
give the translation to the present notation and normalization.  His
spectral parameter \(t\) is our \(\lambda\).  After the lattice is put
in Folland's normal form, the central period is \(c\),
the multiplicity of the representation with central frequency \(n\)
is \(L|n|^d\), and the eigenvalue and multiplicity formulas are
\eqref{e10}--\eqref{e11}.  Thus the non-Abelian counting identity
\eqref{e12} is the starting sum in Strichartz's calculation.
Consequently, the variables in the
calculation on that page become \(T=c\lambda/\pi\), \(r=d/2\), and
\(a_d(j)=\binom{j+d-1}{d-1}\), as specified in \eqref{e12a}.  In this
notation, the formula obtained there reads
\begin{align}
 R_M(\lambda)={}&-2LT^d
 \sum_{j+r\leq T}\frac{a_d(j)}{(j+r)^d}
 \psi\left(\frac{T}{j+r}\right)
+O_M(T^d).
\label{e16}
\end{align}
For \(j\geq1\), the weight in the sum in \eqref{e16} satisfies
\begin{equation}\label{e17}
 \frac{a_d(j)}{(j+d/2)^d}
 =\frac{1}{(d-1)!\,j}+O_d(j^{-2}).
\end{equation}
Since \(|\psi|\leq1/2\), the \(O_d(j^{-2})\) terms in \eqref{e17}
are summable.  The term \(j=0\) and the change from
\(j+d/2\leq T\) to \(1\leq j\leq T\) contribute \(O_d(1)\).
Therefore, with the sawtooth sum \(S_r(T)\) defined in \eqref{e07},
\begin{equation}\label{e18}
 \sum_{j+d/2\leq T}\frac{a_d(j)}{(j+d/2)^d}
 \psi\left(\frac{T}{j+d/2}\right)
 =\frac{1}{(d-1)!}S_{d/2}(T)+O_d(1).
\end{equation}
Substituting \eqref{e18} into \eqref{e16} and using
\(T=c\lambda/\pi\) gives the reduction for \(R_M(\lambda)\) in
\eqref{e14}.  The main term
in the same calculation, together with the volume
identity \eqref{e03}, gives the first expression for \(A_d\) in
\eqref{e15}.  Finally, the generating function
\(\sum_{j\geq0}a_d(j)z^j=(1-z)^{-d}\), the Mellin formula, and the
change of variables \(t=2x\) give the second identity for \(A_d\)
in \eqref{e15}.
\end{proof}

\section{Proof of Theorem~\ref{sup}}
\label{ptup}

By the reduction for \(R_M(\lambda)\) in \eqref{e14},
Theorem~\ref{sup} follows from
an upper estimate for the sawtooth sum \(S_r(T)\) in \eqref{e07}.
We prove the following more general result for every fixed nonnegative rational
shift.

\begin{theorem}\label{tup}
For every fixed \(r\in\mathbb Q_{\geq0}\),
\begin{equation}\label{e08}
 S_r(T)=O_r\!\left((\log T)^{2/3}\right)
 \qquad(T\geq2).
\end{equation}
\end{theorem}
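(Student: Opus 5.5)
The plan is to convert the shifted sum into a sawtooth sum over an arithmetic progression of moduli and then apply Walfisz's method. Write \(r=A/B\) with integers \(A\geq0\), \(B\geq1\). If \(r=0\), then \(S_0(T)\) is the classical sum \(\sum_{m\leq T}m^{-1}\psi(T/m)\), and Walfisz's theorem gives \(O((\log T)^{2/3})\) directly. In general, set \(X=BT\). Then
\[
 \frac{T}{m+r}=\frac{X}{Bm+A},
 \qquad
 \frac1m=\frac{B}{Bm+A}+O_r(m^{-2}).
\]
Since \(|\psi|\leq 1/2\), the \(O_r(m^{-2})\) error is summable. This gives
\[
 S_r(T)=B\sum_{\substack{B+A\leq k\leq X+A\\ k\equiv A \ (\mathrm{mod}\ B)}}\frac1k\,\psi\!\left(\frac Xk\right)+O_r(1).
\]
Changing the endpoint \(X+A\) to \(X\), and adding the finitely many terms \(k<B+A\), costs \(O_r(1)\). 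So it suffices to show
\[
 \Sigma_{A,B}(X):=\sum_{\substack{k\leq X\\ k\equiv A\ (\mathrm{mod}\ B)}}\frac1k\,\psi\!\left(\frac Xk\right)=O_{A,B}\!\left((\log X)^{2/3}\right).
\]

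Next I split the range of \(k\) at \(\sqrt X\). For \(\sqrt X<k\leq X\), write \(X/k=q\) and group by \(q=\lfloor X/k\rfloor\leq\sqrt X\). On each group \(k\) runs over the progression in an interval \((X/(q+1),X/q]\), and \(\psi(X/k)\) is a smooth monotone function of \(k\) there. Euler--Maclaurin, or equivalently the hyperbola identity for \(\sum_{n\leq X}\sum_{k\mid n,\,k\equiv A (B)}1/k\) compared with its smooth main term, should show that this range contributes \(O_{A,B}(1)\). The cancellation comes from the mean of \(\psi\) over each block vanishing up to \(O(1/(qX/q^2))\)-type errors that sum to \(O(1)\). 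This is the same step the paper uses in the lower bound, where the range \(m>\sqrt T\) contributes \(O_r(1)\).

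For \(k\leq\sqrt X\), I split dyadically into ranges \(K<k\leq 2K\) and expand \(\psi\) with Vaaler's trigonometric polynomial of length \(H\). This reduces each range to bounding
\[
 \frac1K\sum_{h\leq H}\frac1h\Bigl|\sum_{\substack{K<k\leq 2K\\ k\equiv A\,(B)}}e\!\left(\frac{hX}{k}\right)\Bigr|
\]
plus an error of size \(O(1/H)\) per block. Substituting \(k=Bl+A\) gives phases \(f(l)=hX/(Bl+A)\), whose high derivatives behave exactly like those of \(hX'/l\) with \(X'=X/B\). Vinogradov--Korobov estimates, in the form Walfisz used for \(\sum e(x/n)\), therefore apply uniformly. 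They give a saving \(\exp(-c(\log K)^3/(\log X)^2)\) whenever \(\exp((\log X)^{2/3})\leq K\leq\sqrt X\). Summed over dyadic \(K\), this saving makes those ranges contribute \(O(1)\).

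The remaining short range \(k\leq\exp(C(\log X)^{2/3})\) is bounded trivially, giving \(\sum_{k\leq Y} \tfrac1k\ll\log Y\ll(\log X)^{2/3}\); this is the source of the exponent \(2/3\). The main obstacle is the Vinogradov step: one must check that Walfisz's estimate for \(\sum_{n\sim N}e(x/n)\) holds uniformly for the shifted denominators \(Bl+A\). I would first try to quote Walfisz's arithmetic-progression version directly. Failing that, I would detect the congruence with additive characters mod \(B\), which introduces only a linear phase \(e(jk/B)\) that Vinogradov's mean value method absorbs.
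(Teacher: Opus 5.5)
Your reduction via \(k=Bm+A\), \(X=BT\), to a sawtooth sum over the progression \(k\equiv A \pmod B\), followed by an appeal to Walfisz's arithmetic-progression estimate, is exactly the paper's argument in Lemma~\ref{lrat}. Before quoting Walfisz you must extend the range from \(k\leq X\) to \(k\leq BX\); there \(\psi(X/k)=X/k-\tfrac12\), so this costs only \(O_{A,B}(1)\). With that step the proof is correct and the fallback sketch is not needed. If you do carry out the sketch, note that the bounds for the dyadic Vinogradov--Korobov blocks do not sum to \(O(1)\): for \(\log K\asymp(\log X)^{2/3}\) the factor \(\exp(-c(\log K)^3/(\log X)^2)\) is only a constant, and there are \(\asymp(\log X)^{2/3}\) such blocks, so that range (with \(H\) chosen block by block) also contributes \(O((\log X)^{2/3})\), which still suffices.
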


The proof of Theorem~\ref{tup} uses Walfisz's estimate on arithmetic
progressions.  We begin with an identity that relates \(S_0(T)\) in
\eqref{e07} to the summatory reciprocal divisor function.

\begin{lemma}
Let \(N=\lfloor T\rfloor\).  Then
\begin{equation}\label{e35}
 S_0(T)=
 T\sum_{m\leq N}\frac1{m^2}
 -\sum_{n\leq T}\sigma_{-1}(n)
 -\frac12H_N,
 \qquad H_N=\sum_{m\leq N}\frac1m,
 \quad
 \sigma_{-1}(n)=\sum_{a\mid n}\frac1a.
\end{equation}
\end{lemma}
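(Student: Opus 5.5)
We verify the identity \eqref{e35} directly from the definitions in \eqref{e07}, taking \(r=0\), so the sum runs over \(1\le m\le T\), that is, over \(m\le N\). Expanding the sawtooth \(\psi(u)=u-\lfloor u\rfloor-\frac12\) termwise gives
\[
 S_0(T)=\sum_{m\le N}\frac1m\Bigl(\frac Tm-\Bigl\lfloor\frac Tm\Bigr\rfloor-\frac12\Bigr)
 =T\sum_{m\le N}\frac1{m^2}-\sum_{m\le N}\frac1m\Bigl\lfloor\frac Tm\Bigr\rfloor-\frac12H_N .
\]
The first and third pieces already match the first and third terms of \eqref{e35}. It therefore remains to identify the middle sum with the summatory function of \(\sigma_{-1}\).

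For the middle sum we use the standard divisor-counting (hyperbola-type) interchange. For each \(m\ge1\), the integer \(\lfloor T/m\rfloor\) counts the multiples \(n=mk\) of \(m\) with \(n\le T\). Hence
\[
 \sum_{m\le N}\frac1m\Bigl\lfloor\frac Tm\Bigr\rfloor
 =\sum_{m\le N}\ \sum_{\substack{n\le T\\ m\mid n}}\frac1m
 =\sum_{n\le T}\ \sum_{\substack{m\mid n\\ m\le N}}\frac1m .
\]
Every divisor \(m\) of an integer \(n\le T\) satisfies \(m\le n\le\lfloor T\rfloor=N\). The constraint \(m\le N\) is therefore automatic, and the inner sum equals \(\sigma_{-1}(n)\). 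Substituting this back gives \eqref{e35}.

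There is no real obstacle here. The only points to check are that the range \(1\le m\le T\) in \eqref{e07} coincides with \(m\le N\), so that the truncation of \(\sum 1/m^2\) and of \(H_N\) at \(N\) is exactly as stated, and that the interchange of summation is over a finite set. Both are immediate.
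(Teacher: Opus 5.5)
Your proposal is correct and follows essentially the same route as the paper: expand \(\psi\) termwise, then rewrite \(\sum_{m\le N}\frac1m\lfloor T/m\rfloor\) as \(\sum_{n\le T}\sigma_{-1}(n)\) by interchanging the order of summation over pairs \(mk\le T\). Your remark that the constraint \(m\le N\) is automatic for divisors of \(n\le T\) just makes explicit a step the paper leaves implicit.
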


\begin{proof}
Expand the sawtooth in the definition of \(S_0(T)\) in \eqref{e07}.
It remains to identify the term containing the floor function:
\[
 \sum_{m\leq N}\frac1m\left\lfloor\frac Tm\right\rfloor
 =\sum_{mk\leq T}\frac1m
 =\sum_{n\leq T}\sum_{m\mid n}\frac1m.
\]
This is \(\sum_{n\leq T}\sigma_{-1}(n)\).
\end{proof}

The required classical estimate is due to Walfisz.

\begin{lemma}[Walfisz]\label{lwal}
Fix \(N\in\N\) and \(1\leq a,b\leq N\).  With the convention for
\(\psi\) used in the definition of \(S_r(T)\) in \eqref{e07},
\begin{equation}\label{e36}
 T(x;a,b,N)
 :=\sum_{\substack{m\leq Nx\\m\equiv a\;(\mathrm{mod}\,N)}}
 \frac1m\psi\left(\frac{x}{m}-\frac bN\right)
 \ll_N(\log x)^{2/3}\qquad(x\geq2).
\end{equation}
In particular, \(S_0(T)=O((\log T)^{2/3})\).
\end{lemma}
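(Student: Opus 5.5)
This is Walfisz's classical estimate, the progression form of his theorem \(\sum_{n\le x}\sigma_{-1}(n)=\zeta(2)x-\tfrac12\log x+O((\log x)^{2/3})\). I would follow his method: Fourier analysis of \(\psi\) combined with Vinogradov--Korobov bounds for exponential sums with phase \(hx/m\).

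\textbf{Deducing the ``in particular'' clause.} Take \(N=1\) and \(a=b=1\). Then \(\psi(x/m-1)=\psi(x/m)\) and the range is \(m\le x\), so \(T(T;1,1,1)=S_0(T)\).

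\textbf{The estimate \eqref{e36}.} Write \(m=Nk+a\) and split the range into three parts.

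\emph{Small \(m\).} Put \(\log Y=(\log x)^{2/3}\). The range \(m\le Y\) is bounded trivially by \(\tfrac12\sum_{m\le Y}1/m\ll(\log x)^{2/3}\). This is exactly where the exponent \(2/3\) comes from.

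\emph{Large \(m\).} For \(\sqrt{Nx}<m\le Nx\), the argument \(x/m\) is small. Here I would not estimate oscillation directly. Instead I would swap roles as in the Dirichlet hyperbola method. The sum \(\sum (1/m)\lfloor x/m-b/N\rfloor\) over the progression counts weighted lattice points \((k,\ell)\) with \(\ell\le x/(Nk+a)-b/N\). Reversing the order turns it into sums over \(\ell\le\sqrt{x/N}\) of harmonic sums over \(k\) in an interval. Those harmonic sums are evaluated by Euler--Maclaurin. Their main terms cancel against the main terms of the expansion of \(\psi\), up to \(O_N(1)\). What remains is \(\psi\)-sums of the same shape, weighted by \(1/\ell\) and with the variable now \(\le\sqrt{x}\). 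Those fall into the middle range below.

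\emph{Middle range, \(Y<m\le\sqrt{Nx}\).} Cover this range by dyadic blocks \(M<m\le 2M\). In each block, replace \(\psi\) by a Vaaler trigonometric polynomial of length \(H=M^{1/2}\) or so. The approximation error is a sum of nonnegative Fej\'er-type terms, which is handled by the same exponential sums. Partial summation removes the weight \(1/m\), so each block is bounded by
\[
M^{-1}\sum_{h\le H}\frac1h\Bigl|\sum_{M<Nk+a\le 2M}e\Bigl(\frac{hx}{Nk+a}\Bigr)\Bigr|
\]
plus the error term.

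\textbf{Main obstacle: the exponential sum.} The phase \(f(k)=hx/(Nk+a)\) has derivatives \(f^{(j)}\asymp hxM^{-j-1}\) for all \(j\). Writing \(\log(hx)=\lambda\log M\) with \(\lambda\le(\log x)^{1/3}\), the Vinogradov--Korobov estimate (via the mean value theorem) gives the sum as \(\ll M\exp(-c(\log M)^3/(\log x)^2)\), uniformly in \(h\le H\) and in the residue class. Summing over \(h\) costs a factor \(\log H\). Summing over dyadic blocks, the saving \(\exp(-c(\log M)^3/(\log x)^2)\) is at least a fixed power of \(\log x\) once \(\log M\ge(\log x)^{2/3}\). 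Hence the middle range contributes \(O_N(1)\) after summing the geometric-type series in \(\log M\).

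The step requiring care is keeping the Vinogradov bound uniform in the arithmetic-progression parameters \(a,b,N\) and in \(h\), and making sure the Vaaler error term is also covered. Since \(N\) is fixed, the progression only rescales the phase, so the standard statement (Walfisz, \emph{Weylsche Exponentialsummen}, Ch.~IV–V) should apply verbatim. In the write-up I would cite it there rather than redo the mean value theorem.
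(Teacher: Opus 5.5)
\paragraph{Verdict.} Your reduction of the ``in particular'' clause ($N=a=b=1$ and $\psi(u-1)=\psi(u)$) is the same as the paper's. The paper proves \eqref{e36} itself only by citing Walfisz's Hilfssatz~5. It checks that his $\psi$ also equals $-1/2$ at integers and that the zero residue is represented by $b=N$. Your reconstruction of Walfisz's argument, however, does not yield the exponent $2/3$, because the bookkeeping in the middle range fails.

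\paragraph{Where it fails.} Put $s=\log M/(\log x)^{2/3}$. The Vinogradov--Korobov saving is $\exp(-c(\log M)^3/(\log x)^2)=\exp(-cs^3)$. At your threshold $\log M=(\log x)^{2/3}$ this equals $\exp(-c)$, a constant. It becomes a fixed negative power of $\log x$ only once $\log M\gg(\log x)^{2/3}(\log\log x)^{1/3}$. There are $\asymp(\log x)^{2/3}$ dyadic blocks per unit of $s$. With $H=M^{1/2}$, each block also pays $\log H\asymp s(\log x)^{2/3}$. Your block bounds therefore add up to
\[
 \sum_{M\ \mathrm{dyadic}}(\log H)\exp(-cs^3)
 \asymp(\log x)^{4/3}\int_1^\infty s\exp(-cs^3)\,ds
 \asymp(\log x)^{4/3},
\]
which is worse than the trivial bound $O(\log x)$. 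So the claim that the middle range is $O_N(1)$ does not follow from the estimate you quote.

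\paragraph{Repair.} Let the Vaaler length depend on the block, say $\log H_M\asymp 1+s^3$. Then $H_M\le x^{O(1)}$, so the exponential sum bound stays uniform in $h\le H_M$. Each block, including the Vaaler error $H_M^{-1}$, is then $\ll(1+s^3)\exp(-c's^3)$. Summing gives
\[
 (\log x)^{2/3}\int_1^\infty(1+s^3)\exp(-c's^3)\,ds\ll(\log x)^{2/3}.
\]
Thus the middle range is of the same order as the trivially bounded range, not $O_N(1)$. Moving the cut to $\log Y\asymp(\log x)^{2/3}(\log\log x)^{1/3}$ would make your accounting correct, but it only gives $(\log x)^{2/3}(\log\log x)^{1/3}$.

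\paragraph{A smaller point.} In the large range, the Euler--Maclaurin remainders have weights $(N\ell+b)/(Nx)\asymp\ell/x$, not $1/\ell$. They are therefore trivially $O_N(1)$ and do not feed back into the middle range. In fact that range can be handled directly by Euler summation on the blocks where $\lfloor x/m-b/N\rfloor$ is constant, as in Lemma~\ref{lhyp}, with no hyperbola swap.

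\paragraph{Comparison.} With these corrections you would have a reproof that depends only on the Vinogradov--Korobov bound. The paper instead imports the finished progression estimate from Walfisz, which avoids this parameter optimization altogether.
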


\begin{proof}
Walfisz \cite[p.~272]{Wal60} takes \(a\) and \(b\) to be
integers between \(1\) and \(N\), and a prime on the summation sign
denotes the congruence \(m\equiv a\pmod N\).  With these conventions,
the asserted estimate is Hilfssatz~5, equations (120)--(121), on
pp.~275--276 of Walfisz \cite{Wal60}:
\[
 T(x;a,b,N)
 =\sum_{\substack{m\leq Nx\\m\equiv a\;(\mathrm{mod}\,N)}}
 \frac1m\psi\left(\frac{x}{m}-\frac bN\right)
 \ll_N(\log x)^{2/3}.
\]
His definition (98) is
\(\psi(u)=u-\lfloor u\rfloor-\tfrac12\), including the value
\(-1/2\) at integers, so there is no endpoint convention
conversion.  The zero residue in the translation is represented by
\(b=N\), because \(\psi(u-1)=\psi(u)\).  Taking \(N=a=b=1\) gives
\(S_0(T)\ll(\log T)^{2/3}\) for all sufficiently large \(T\).
Enlarging the absolute constant covers \(2\leq T\leq T_0\).
\end{proof}

\begin{lemma}
\label{lrat}
Let \(r=A/B\geq0\), where \(A\in\Z_{\geq0}\), \(B\in\N\), and
\((A,B)=1\).  Then
\begin{equation}\label{e37}
 S_{A/B}(T)=O_{A,B}\!\left((\log T)^{2/3}\right)
 \qquad(T\geq2).
\end{equation}
\end{lemma}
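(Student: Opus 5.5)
The plan is to reduce \(S_{A/B}(T)\) to Walfisz's sum \(T(x;a,b,N)\) in Lemma~\ref{lwal}, using the modulus \(N=B\) and the scaled variable \(x=BT\).

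\textbf{Rewriting the summand.} For \(1\leq m\leq T\) we have
\[
 \frac{T}{m+A/B}=\frac{BT}{Bm+A}.
\]
Put \(k=Bm+A\). As \(m\) runs over \(1\leq m\leq T\), the index \(k\) runs over the integers \(k\equiv A\pmod B\) with \(A+B\leq k\leq BT+A\). Using
\[
 \frac1m=\frac{B}{k-A}=\frac{B}{k}+O_{A,B}(k^{-2}),
\]
and \(|\psi|\leq\frac12\), the error terms sum to \(O_{A,B}(1)\). Hence
\[
 S_{A/B}(T)=B\sum_{\substack{A+B\leq k\leq BT+A\\ k\equiv A\ (\mathrm{mod}\ B)}}\frac1k\,\psi\Bigl(\frac{BT}{k}\Bigr)+O_{A,B}(1).
\]

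\textbf{Matching Walfisz's range.} Walfisz's sum is over \(k\leq Nx\) with \(N=B\), \(x=BT\), so it runs up to \(k\leq B^2T\), while ours stops at \(BT+A\). Two boundary adjustments are needed.

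\begin{itemize}
\item At the lower end, dropping or adding the finitely many \(k<A+B\) costs \(O_{A,B}(1)\).
\item At the upper end, extending from \(k\leq BT+A\) to \(k\leq B^2T\) adds the terms with \(BT<k\leq B^2T\). On this range the argument \(BT/k\) lies in \([1/B,1)\), so \(\psi(BT/k)=BT/k-\tfrac12\). The added piece is therefore an explicit smooth sum,
\[
 \sum_{\substack{BT<k\leq B^2T\\ k\equiv A\ (\mathrm{mod}\ B)}}\frac1k\Bigl(\frac{BT}{k}-\frac12\Bigr)=O_{A,B}(1),
\]
which one sees either by comparing with an integral or simply because \(\sum 1/k\) over \(k\asymp T\) in a progression is \(O_B(1)\).
\end{itemize}

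\textbf{Residue and shift conventions.} Choose the representative \(a\in\{1,\dots,B\}\) with \(a\equiv A\pmod B\); in particular \(a=B\) when \(A\equiv0\). Take \(b=B\), so that \(\psi(x/k-b/N)=\psi(x/k-1)=\psi(x/k)\). Lemma~\ref{lwal} then gives
\[
 S_{A/B}(T)=B\,T(BT;a,B,B)+O_{A,B}(1)\ll_{A,B}(\log(BT))^{2/3}\ll_{A,B}(\log T)^{2/3}
\]
for \(T\) large. Small \(T\geq2\) are absorbed by enlarging the constant.

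\textbf{Main obstacle.} The only delicate point is the bookkeeping: matching Walfisz's range \(m\leq Nx\) and his residue convention \(1\leq a,b\leq N\) to our sum. In particular the residue class \(0\) must be written as \(a=B\), and the extra range is handled through the explicit linear form of \(\psi\) on \([1/B,1)\). The endpoint value \(\psi(1)\) at \(k=BT\) is a single bounded term and causes no trouble.
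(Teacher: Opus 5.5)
Your proposal is correct and follows essentially the same route as the paper. Both substitute $k=Bm+A$ with $B/(k-A)=B/k+O_{A,B}(k^{-2})$, take $x=BT$, $N=b=B$ and $a\in\{1,\dots,B\}$, extend the range to $k\le B^2T$ using $\psi(BT/k)=BT/k-\tfrac12$ there, and then apply Lemma~\ref{lwal}; the only slip is at the upper endpoint (the terms with $BT<k\le BT+A$ are already in the sum, and for $B=1$, $A>0$ the change to $k\le B^2T$ removes rather than adds terms), which is harmless since these are $O_{A,B}(1)$ terms of size $O(1/T)$, as in the paper's observation $BN+A=x+O_{A,B}(1)$.
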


\begin{proof}
Put \(N=\lfloor T\rfloor\) and make the change of variables
\(j=Bm+A\).  Since
\[
 \frac{B}{j-A}=\frac{B}{j}
 +O_{A,B}\!\left(\frac1{j^2}\right)
 \qquad(j\geq B+A),
\]
absolute summation of the error gives
\begin{equation}\label{e38}
 S_{A/B}(T)
 =
 B\!\!\sum_{\substack{B+A\leq j\leq BN+A\\
                       j\equiv A\;(\mathrm{mod}\,B)}}
 \frac1j\psi\left(\frac{BT}{j}\right)+O_{A,B}(1).
\end{equation}
Write \(x=BT\), and let \(a\in\{1,\ldots,B\}\) represent the residue
class of \(A\) modulo \(B\).  The progression sum
\(T(x;a,B,B)\), defined in \eqref{e36}, is
\[
 T(x;a,B,B)
 =\sum_{\substack{j\leq Bx\\j\equiv a\;(\mathrm{mod}\,B)}}
 \frac1j\psi\left(\frac{x}{j}-1\right)
 =\sum_{\substack{j\leq Bx\\j\equiv a\;(\mathrm{mod}\,B)}}
 \frac1j\psi\left(\frac{x}{j}\right)
 \ll_B(\log x)^{2/3}.
\]
The finitely many initial terms omitted from the expression for
\(S_{A/B}(T)\) in \eqref{e38} contribute \(O_{A,B}(1)\).
Changing the upper endpoint of the progression sum for
\(S_{A/B}(T)\) in \eqref{e38} to \(Bx=B^2T\) introduces an additional
term of order \(O_{A,B}(1)\).  Indeed,
\(BN+A=x+A-B\{T\}=x+O_{A,B}(1)\).  Apart from the bounded number
of terms between these two endpoints, the added range satisfies
\(x<j\leq Bx\), and there
\(\psi(x/j)=x/j-1/2\); hence
\[
 \sum_{\substack{x<j\leq Bx\\j\equiv a\;(\mathrm{mod}\,B)}}
 \frac1j\left|\frac{x}{j}-\frac12\right|
 \ll_B 1.
\]
Thus the decomposition for \(S_{A/B}(T)\) in \eqref{e38} becomes
\[
 S_{A/B}(T)=B\,T(BT;a,B,B)+O_{A,B}(1).
\]
The estimate for \(T(BT;a,B,B)\) in \eqref{e36} now proves the bound for
\(S_{A/B}(T)\) in \eqref{e37}.
\end{proof}

\begin{remark}
Lemma~\ref{lwal} is the weighted sawtooth estimate used in
Walfisz's four-dimensional ellipsoid remainder, and it is the form
needed here.  The identity for \(S_0(T)\) in \eqref{e35} gives the
equivalent reciprocal divisor formulation.  The progression form
also shows that the exponent \(2/3\) is uniform in \(T\) for every
fixed rational shift.  The direct argument in Section~\ref{ptom}
treats the two signs for rational shifts directly.
\end{remark}

\begin{proof}[Proof of Theorem~\ref{tup}]
Write \(r=A/B\) in lowest terms, with \(A\in\Z_{\geq0}\) and
\(B\in\N\).  The estimate for \(S_{A/B}(T)\) in \eqref{e37}, proved
in Lemma~\ref{lrat}, proves the estimate for \(S_r(T)\) in
\eqref{e08}.
\end{proof}

\begin{proof}[Proof of Theorem~\ref{sup}]
Substitute the estimate for \(S_{d/2}(c\lambda/\pi)\) from
Theorem~\ref{tup}, namely \eqref{e08}, into the reduction for
\(R_M(\lambda)\) in \eqref{e14}.  Since
\(\log(2+c\lambda/\pi)\asymp_M\log(2+\lambda)\), the asserted
spectral upper bound \eqref{e06} follows.
\end{proof}

\section{Proof of Theorem~\ref{som}}
\label{ptom}

The reduction for \(R_M(\lambda)\) in \eqref{e14} also converts
two-sided estimates
for \(S_{d/2}(T)\) in \eqref{e07} into lower bounds for the spectral
remainder.  We prove the following arithmetic theorem.

\begin{theorem}\label{tom}
For every fixed \(r\in\mathbb Q_{\geq0}\),
\begin{equation}\label{e09}
 \limsup_{T\to\infty}
 \frac{S_r(T)}{\log\log T}\geq\frac12,
 \qquad
 \liminf_{T\to\infty}
 \frac{S_r(T)}{\log\log T}\leq-\frac12.
\end{equation}
If \(2r\in\Z\), then the stronger estimates hold:
\begin{equation}\label{e09b}
 \limsup_{T\to\infty}
 \frac{S_r(T)}{\log\log T}\geq\frac{\e^\gamma}{2},
 \qquad
 \liminf_{T\to\infty}
 \frac{S_r(T)}{\log\log T}\leq-\frac{\e^\gamma}{2}.
\end{equation}
\end{theorem}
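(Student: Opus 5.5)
Since \(S_r\) jumps down exactly when \(T/(m+r)\) crosses an integer, the plan is to find \(T\) at which many summands jump together, and to control the value of the sum just before and just after such a simultaneous jump.

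\textbf{Half-integer shifts.} We first prove \eqref{e09b} by transferring P\'etermann's theorem. By the identity \eqref{e35},
\[
T\sum_{m\le N}m^{-2}=\zeta(2)T+O(1),
\qquad
\tfrac12H_N=\tfrac12\log T+O(1),
\]
so \(S_0(T)=-E_{-1}(T)+O(1)\). P\'etermann's two-sided result,
\[
\limsup \frac{E_{-1}(x)}{\log\log x}=\frac{\e^\gamma}{2},
\qquad
\liminf \frac{E_{-1}(x)}{\log\log x}=-\frac{\e^\gamma}{2},
\]
at least gives the inequalities in the required direction, and so yields \eqref{e09b} for \(r=0\).

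For an integer shift \(r=k\), substituting \(m'=m+k\) gives
\[
S_k(T)=\sum_{k<m'\le T+k}\Bigl(\frac1{m'}+O(m'^{-2})\Bigr)\psi(T/m').
\]
The terms with \(m'\le k\) are bounded, and the range \(T<m'\le T+k\) contributes \(O(1/T)\). Hence \(S_k(T)=S_0(T)+O_k(1)\).

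For a half-integer shift \(r=k+\tfrac12\), write \(m+r=(2m+2k+1)/2\). Then
\[
S_r(T)=2\sum_{\substack{j\ \mathrm{odd}\\ j\le 2T}}\frac1j\,\psi\!\left(\frac{2T}{j}\right)+O(1).
\]
The odd-\(j\) sum equals the full sum \(S_0(2T)\) minus the even part. With \(j=2i\), the even part is \(\tfrac12\sum_i i^{-1}\psi(T/i)=\tfrac12 S_0(T)\). Therefore
\[
S_r(T)=2S_0(2T)-S_0(T)+O(1)=-2E_{-1}(2T)+E_{-1}(T)+O(1).
\]
The obstacle here is that extremal values of the two \(E_{-1}\) terms must not cancel. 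I would try to avoid using P\'etermann's statement as a black box and instead revisit its mechanism, so that the extremal \(x\) can be chosen with \(x\) and \(2x\) aligned (for example \(x\) running over large odd highly-divisible numbers). Alternatively, one can carry out a direct jump argument for the odd-\(j\) sum, which gives the constant \(\e^\gamma/2\) because \(\sum_{j\mid n,\,j\ \mathrm{odd}}1/j\) has the same Gronwall constant along odd \(n\).

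\textbf{General rational shifts.} For \eqref{e09}, write \(r=A/B\) and use the midpoint convention: let \(\psic\) equal \(\psi\) except that \(\psic=0\) at integers. The tail \(m>\sqrt T\) is \(O_r(1)\) by Lemma~\ref{lrat}-type progression estimates, and in any case by the first-derivative (Kusmin--Landau) bound applied dyadically. For \(T=nQ\), the remaining summands with \(m\le\sqrt T\) are periodic in \(n\) with period dividing \(m+r\) scaled by \(B\). Averaging over a full period therefore gives mean zero in each term, so some \(n\le Q^3\) has midpoint sum \(\le 0\), and another has midpoint sum \(\ge 0\).

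Now choose \(Q=\operatorname{lcm}\{Bm+A: m\le y\}\), so that \(\log Q\asymp y\) and every \(T/(m+r)\) is an integer for \(m\le y\). At such a simultaneous jump,
\[
S_r(T^\pm)=S^{\circ}_r(T)\mp\tfrac12\sum_{m\le y}\frac1m
\approx S^{\circ}_r(T)\mp\tfrac12\log y,
\]
and \(\log y\sim\log\log T\). The key step, which I expect to be the main obstacle, is selecting \(T\) among multiples of \(Q\) so that the midpoint sum has the favourable sign while keeping \(T\le Q^{O(1)}\), so that \(\log\log T\sim\log y\). Periodicity in \(n\) up to \(Q^3\) is exactly what permits this, and it gives constant \(\tfrac12\) on each side.
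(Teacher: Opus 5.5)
Your treatment of $r=0$, of integer shifts, and of the reduction $S_{k+1/2}(T)=2S_0(2T)-S_0(T)+O_k(1)$ matches the paper. However, the proof of \eqref{e09b} for half-integer shifts stops exactly at the obstacle you flag. This is the case $r=d/2$ with $d$ odd, which the spectral theorem needs.

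\emph{Why your route is incomplete.} ``Revisiting P\'etermann's mechanism'' is not an argument. The alternative you offer rests on a false claim: the odd-divisor sum $\sum_{j\mid n,\ j\ \mathrm{odd}}1/j$ does \emph{not} have Gronwall constant $\e^\gamma$. Removing $p=2$ from Mertens' product loses the factor $(1-\tfrac12)^{-1}=2$, so its constant is $\e^\gamma/2$. The jump of $S_{k+1/2}$ still has size $\e^\gamma\log\log$, but only because $1/m\approx 2/(2m+2k+1)$, which is not the reason you give. A jump argument also needs midpoint control at those points. You set that up only later, with the jump counted over $m\le y$, and that yields only $\tfrac12$.

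\emph{The missing idea.} The paper uses a one-line inversion that keeps P\'etermann as a black box. With $H(T)=2S_0(2T)-S_0(T)$,
\[
S_0(x)=2^{-K}S_0(x/2^K)+\sum_{j=1}^{K}2^{-j}H(x/2^j),
\]
and the weights are positive with total mass $<1$. Suppose $H(T)\le c\log\log T$ for $T\ge T_0$ with $0<c<\e^\gamma/2$. Taking $x/2^K\in[T_0,2T_0)$ gives $S_0(x)\le c\log\log x+O(1)$, contradicting P\'etermann through $S_0=-E_{-1}+O(1)$. The liminf is symmetric, so no cancellation can occur.

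\emph{An alternative fix.} Your own jump argument also closes the gap if, at $T=nQ$ with $Q=\operatorname{lcm}(1,\dots,y)$, you count \emph{all} $m$ with $(2m+2k+1)\mid Q$. Then $J_{k+1/2}(nQ)\ge 2\sum_{j\mid Q,\ j\ \mathrm{odd},\ j\ge 2k+3}1/j=\e^\gamma\log y-o(\log y)$ by Mertens. Combined with the $O(1)$ midpoint bound, this gives $\pm\e^\gamma/2$.

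The general rational part follows the paper's route but has inaccuracies to fix.
\begin{itemize}
\item \textbf{Tail bound.} Lemma~\ref{lrat} bounds the whole sum by $(\log T)^{2/3}$ and cannot give $O_r(1)$ for the tail $m>\sqrt T$. The paper gets $O_r(1)$ (Lemma~\ref{lhyp}) by Euler summation on each block $x/(k+1)<m+r<x/k$. There the centered summand is smooth, its block integral is $O(k^{-2})$, and the error is $O((k+1)/x)$.
\item \textbf{Averages.} The averages over $n\le Q^3$ are not exactly zero. For fixed $m$ the admissible $n$ start at $m^2/Q$ and do not form complete periods. Incomplete periods cost $O(q_m/m)=O_r(1)$ for each $m\le Q^2$, which is negligible against $N=Q^3$. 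You therefore only get $n_\pm$ with midpoint value $\le K_r$, resp.\ $\ge -K_r$, which suffices.
\item \textbf{Jump formula.} Your displayed formula for $S_r(T^\pm)$ should be a pair of inequalities, since further $m>y$ with $T/(m+r)\in\Z$ only enlarge the jump.
\item \textbf{Upper side.} You must replace the left limit $S_r(T_+^-)$ by an actual value $S_r(U_+)$ with $U_+\uparrow T_+$.
\item \textbf{Size of $Q$.} Only $\log Q\ll y$ is needed to get $\log\log T\le\log y+O(1)$.
\end{itemize}
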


The stronger bounds for \(S_r(T)\) in \eqref{e09b} apply to the
spectral shift \(r=d/2\).  We first establish these bounds from
P\'etermann's theorem.  A separate argument then proves the two-sided
bounds for \(S_r(T)\) in \eqref{e09} at every nonnegative rational
shift.

\subsection{The spectral shifts and P\'etermann's theorem}

We use the following result of P\'etermann \cite{Pet87,Pet88}.

\begin{lemma}[P\'etermann]\label{lpet}
Let
\[
 E_{-1}(x)=\sum_{n\leq x}\sigma_{-1}(n)
 -\zeta(2)x+\frac12\log x.
\]
Then
\begin{equation}\label{ep1}
 \limsup_{x\to\infty}\frac{E_{-1}(x)}{\log\log x}
 \geq\frac{\e^\gamma}{2},
 \qquad
 \liminf_{x\to\infty}\frac{E_{-1}(x)}{\log\log x}
 \leq-\frac{\e^\gamma}{2}.
\end{equation}
\end{lemma}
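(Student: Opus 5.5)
Since the statement is P\'etermann's theorem, we could simply quote \cite{Pet87,Pet88}. To keep the argument self-contained, we would instead derive it from the jump structure of \(S_0\) and the identity \eqref{e35}. By \eqref{e35} and the expansions \(\sum_{m\le N}m^{-2}=\zeta(2)-1/N+O(N^{-2})\) and \(H_N=\log N+\gamma+O(1/N)\), we have \(S_0(x)=-E_{-1}(x)+O(1)\). So it suffices to prove \eqref{ep1} with \(E_{-1}\) replaced by \(-S_0\), with the same constants. The mechanism is the jump
\[
 S_0(n)-S_0(n^-)=-\sigma_{-1}(n)+\frac1{2n}.
\]
If at a suitable integer \(n\) the midpoint value \(S_0^\circ(n)=\tfrac12\bigl(S_0(n)+S_0(n^-)\bigr)\) is \(o(\log\log n)\), then the two one-sided values are \(\mp\tfrac12\sigma_{-1}(n)+o(\log\log n)\). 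The factor \(\tfrac12\) in \eqref{ep1} comes exactly from this splitting.

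Step 1 (choice of the modulus). Let \(Q=Q_y=\prod_{p\le y}p^{\lfloor \log y/\log p\rfloor}\), or simply the product of the primes up to \(y\) raised to large powers. Then \(\log Q\asymp y\), and Mertens' theorem gives
\[
 \sigma_{-1}(Q)\ge\prod_{p\le y}(1-1/p)^{-1}(1+o(1))=(\e^\gamma+o(1))\log y=(\e^\gamma+o(1))\log\log Q .
\]
Since \(\sigma_{-1}\) is monotone under divisibility, every multiple \(n=kQ\) satisfies \(\sigma_{-1}(n)\ge\sigma_{-1}(Q)\). If \(k\le Q^3\), then also \(\log\log n=\log\log Q+O(1)\).

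Step 2 (the midpoint sum). Write \(S_0^\circ(n)=\sum_{m\le n}m^{-1}\psic(n/m)\), where \(\psic\) is the sawtooth with value \(0\) at integers. We split at \(m=\sqrt n\). For \(m>\sqrt n\) we group the terms by \(q=\lfloor n/m\rfloor\) and use the standard Dirichlet hyperbola rearrangement (equivalently, the \(\sqrt x\)-truncated form of \(E_{-1}\)). This should show that the range \(m>\sqrt n\) contributes \(O(1)\); it matches the statement in Subsection~\ref{mthd}. For \(m\le\sqrt n\) with \(m\mid Q\), we have \(\psic(kQ/m)=0\) exactly. For the remaining \(m\), the map \(k\mapsto\psic(kQ/m)\) is periodic with period \(m/(m,Q)\) and has mean zero over a period, since \(\sum_{t\bmod q}\psic(t/q)=0\).

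Step 3 (averaging). We average \(S_0^\circ(kQ)\) over \(1\le k\le K=Q^3\), truncating the inner sum at \(m\le\sqrt{Q}\,Q^{3/2}\) uniformly. Each \(m\) then contributes at most \(m^{-1}\cdot m/K=1/K\) from the incomplete period. Summing over \(m\le Q^2\) gives \(O(Q^2/K)=o(1)\). Hence the mean of \(S_0^\circ(kQ)\) over \(k\) is \(O(1)\). This alone does not force an individual \(k\) with \(|S_0^\circ(kQ)|\) small. It does, however, yield some \(k_+\) with \(S_0^\circ(k_+Q)\ge -O(1)\) and some \(k_-\) with \(S_0^\circ(k_-Q)\le O(1)\). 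This is exactly what is needed: at \(n=k_-Q\) we get \(S_0(n)=S_0^\circ(n)-\tfrac12\sigma_{-1}(n)+o(1)\le-(\e^\gamma/2+o(1))\log\log n\), hence \(E_{-1}(n)\ge(\e^\gamma/2+o(1))\log\log n\). At \(n=k_+Q\), the left limit gives \(S_0(n^-)\ge(\e^\gamma/2+o(1))\log\log n\), and therefore \(E_{-1}(x)\le-(\e^\gamma/2+o(1))\log\log x\) for \(x\) slightly less than \(n\), where \(\log x\) and \(\log\log x\) are continuous. Letting \(y\to\infty\) gives both parts of \eqref{ep1}.

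The main obstacle is Step 2's claim that the range \(m>\sqrt n\) contributes \(O(1)\) uniformly at the points \(n=kQ\). A naive bound gives only \(O(\log n)\), and the tail does not cancel termwise. I would first try to redo the hyperbola identity with midpoint conventions. This converts the tail into \(\sum_{q\le\sqrt n}\) of smooth differences of \(1/m\) across blocks, together with a second sawtooth sum \(\sum_{q\le\sqrt n}q^{-1}\psic(n/q)\)-type term. That second term is again periodic in \(k\) and averages out in Step 3. If this fails to close, the fallback is to cite P\'etermann \cite{Pet87,Pet88} directly for \eqref{ep1}.
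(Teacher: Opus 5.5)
\textbf{Comparison with the paper.} The paper does not prove this lemma; it quotes it from P\'etermann \cite{Pet87,Pet88}. Your argument is a genuinely different, self-contained route, and it is correct.

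In substance it is the paper's own direct method of Section~\ref{ptom} specialized to $r=0$: Lemma~\ref{lhyp}, the progression mean \eqref{e45}, Corollary~\ref{cmid}, and the jump identities \eqref{e50}--\eqref{e51}. There is one decisive change. For $r=0$ one has exactly $J_0(n)=\sigma_{-1}(n)-1/n$, and you bound this below by $\sigma_{-1}(Q)$ using \emph{all} divisors. The paper's proof of \eqref{e09} keeps only the divisors $m\le y$ and gets $J_r(nQ)\ge H_{L_y}\approx\log y$.

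For $Q=\operatorname{lcm}(1,\dots,y)=\prod_{p\le y}p^{a_p}$ with $p^{a_p+1}>y$, one has $\sigma_{-1}(Q)=\prod_{p\le y}(1-p^{-a_p-1})(1-p^{-1})^{-1}$. The first factor is at least $1-\pi(y)/y\to1$, and Mertens gives $(\e^\gamma+o(1))\log y$. Since $\log\log(kQ)=\log y+O(1)$ for $k\le Q^3$, this yields exactly the constant $\e^\gamma/2$.

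So the averaging-plus-simultaneous-jump argument by itself already delivers P\'etermann's constants for $r=0$, and Lemma~\ref{lshf} then transfers them to the half-integer shifts. The citation buys only brevity. Your route makes Theorem~\ref{som} independent of \cite{Pet87,Pet88} and shows that $\e^\gamma/2$ is half of Gronwall's constant.

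\textbf{The step you flagged.} It is not an obstacle. The uniform $O(1)$ bound for the range $m>\sqrt n$ is Lemma~\ref{lhyp} with $r=0$. It is proved by Euler summation on the blocks $x/(k+1)<u<x/k$:
\begin{itemize}
\item on each block the summand is $x/u^2-(k+\tfrac12)/u$;
\item the block integral is $1-(k+\tfrac12)\log(1+1/k)=O(k^{-2})$;
\item the summation error is $O((k+1)/x)$, and summing over $k\le\sqrt x$ gives $O(1)$.
\end{itemize}
Your hyperbola rearrangement also closes. However, it produces $\sum_{q<\sqrt n}(q/n)\psi(n/q)=O(1)$, not a $1/q$-weighted sawtooth sum, so no second averaging is needed.

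\textbf{Two small fixes.} First, truncate at $m\le\sqrt{kQ}$ rather than uniformly at $Q^2$. For fixed $m$ the admissible $k$ then form the interval $\lceil m^2/Q\rceil\le k\le Q^3$, so your incomplete-period bound is unchanged.

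Second, drop the alternative of ``primes raised to large powers''. You need both $\log\log Q=(1+o(1))\log y$ and $\sum_{p\le y}p^{-a_p-1}\to0$. The lcm choice guarantees both. Exponents of size $y$ instead make $\log\log Q\sim2\log y$ and halve the constant.
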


The next lemma transfers these estimates to the integer and
half-integer shifts that occur in the Heisenberg spectrum.

\begin{lemma}\label{lshf}
For every fixed integer \(k\geq0\),
\begin{align}
 S_k(T)&=S_0(T)+O_k(1),\label{ep2}\\
 S_{k+1/2}(T)&=2S_0(2T)-S_0(T)+O_k(1).\label{ep3}
\end{align}
Consequently the stronger bounds for \(S_r(T)\) in \eqref{e09b} hold
for every \(r\geq0\) such that \(2r\in\Z\).
\end{lemma}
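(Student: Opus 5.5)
Both identities come from reindexing the sum in \eqref{e07}, followed by bookkeeping of boundary terms with the crude bound \(|\psi|\le 1/2\).

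\textbf{Proof of \eqref{ep2}.} Substitute \(m'=m+k\). This gives
\[
 S_k(T)=\sum_{k+1\le m'\le T+k}\frac{1}{m'-k}\,\psi\!\left(\frac{T}{m'}\right).
\]
We compare this with \(S_0(T)\) in three steps.
\begin{itemize}
\item Replacing \(1/(m'-k)\) by \(1/m'\) costs \(\sum_{m'}k/(m'(m'-k))=O_k(1)\).
\item Dropping the initial terms \(m'\le k\) from \(S_0(T)\) costs \(O_k(1)\).
\item The tail \(T<m'\le T+k\) consists of at most \(k\) terms, each of size \(O(1/T)\).
\end{itemize}
This proves \eqref{ep2}.

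\textbf{Proof of \eqref{ep3}.} Here \(m+k+\tfrac12=(2m+2k+1)/2\), so
\[
 \frac{T}{m+k+1/2}=\frac{2T}{j},\qquad j=2m+2k+1.
\]
The index \(j\) runs over odd integers in the range \(2k+3\le j\le 2T+2k+1\). The weight is
\[
 \frac1m=\frac{2}{j-2k-1}=\frac2j+O_k(j^{-2}),
\]
and the error is absolutely summable. After adjusting finitely many terms at each end (each such term is \(O(1)\) or \(O(1/T)\)), we get
\[
 S_{k+1/2}(T)=2\sum_{\substack{j\le 2T\\ j\ \mathrm{odd}}}\frac1j\,\psi\!\left(\frac{2T}{j}\right)+O_k(1).
\]
To isolate the odd part, write the full sum up to \(2T\) as odd terms plus even terms. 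The full sum is \(S_0(2T)\). For the even terms, put \(j=2i\):
\[
 \sum_{i\le T}\frac{1}{2i}\,\psi\!\left(\frac{T}{i}\right)=\tfrac12 S_0(T).
\]
Hence the odd sum equals \(S_0(2T)-\tfrac12 S_0(T)\). Multiplying by \(2\) gives \eqref{ep3}.

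\textbf{Deducing \eqref{e09b}.} By \eqref{e35} we have \(S_0(x)=-E_{-1}(x)+O(1)\). To see this, note that \(T\sum_{m\le N}m^{-2}=\zeta(2)T+O(1)\) and \(\tfrac12H_N=\tfrac12\log T+O(1)\).

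For integer \(r\), \eqref{ep2} reduces the claim to \(S_0\). Applying \eqref{ep1} along sequences \(x\to\infty\), with the signs interchanged because of the minus sign, gives both halves of \eqref{e09b}, since \(\log\log x\) is unchanged.

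The half-integer case is the main obstacle. By \eqref{ep3},
\[
 S_{k+1/2}(T)=2S_0(2T)-S_0(T)+O(1),
\]
and a bound for \(S_0\) alone does not control this combination. I would try to rewrite it arithmetically. Using \eqref{e35},
\[
 2S_0(2T)-S_0(T)=-\Bigl(2\sum_{n\le 2T}\sigma_{-1}(n)-\sum_{n\le T}\sigma_{-1}(n)\Bigr)+3\zeta(2)T-\tfrac12\log T+O(1).
\]
The weights are \(2/j\) over odd \(j\) divisors. This is the summatory function of
\[
 f(n)=2\sum_{j\mid n,\ j\ \mathrm{odd}}\frac1j
\]
evaluated at \(2T\).

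I would then follow the mechanism behind Pétermann's theorem, as in Lemma~\ref{lpet}, or its jump version. Choose \(x=n\) odd with \(n\) a product of small odd primes, so that the jump of the odd divisor sum is \(\gg e^\gamma\log\log n\). Then compare the values just before and after the jump, and use the averaged vanishing of the error to pin down the midpoint.

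The cleanest route is to quote Pétermann's result in its general form for divisor sums restricted to a residue class. I would check that the odd-divisor analogue yields the same constant \(e^\gamma/2\) after the factor \(2\) normalization. The reason to expect this is that the local factor at \(2\) removed from \(\sigma_{-1}\) is compensated by the prefactor \(2\) in \(f\). This last verification is the step I expect to require the most care.
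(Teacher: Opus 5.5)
Your arguments for \eqref{ep2} and \eqref{ep3}, and for \eqref{e09b} at integer $r$, are correct and match the paper's.

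\textbf{The gap.} The half-integer case of \eqref{e09b} is not proved. This is exactly the case Theorem~\ref{som} needs for odd $d$, already for $d=1$. You outline possible routes and explicitly postpone the decisive check of the constant. The premise that sent you back to the arithmetic is also false for limsup/liminf statements. That premise is that P\'etermann's information about $S_0$ alone cannot control $H(T):=2S_0(2T)-S_0(T)$.

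\textbf{The missing idea} is a telescoping identity with positive weights. Since $S_0(x)=\tfrac12 S_0(x/2)+\tfrac12 H(x/2)$, iteration gives
\[
 S_0(x)=2^{-K}S_0(x/2^K)+\sum_{j=1}^{K}2^{-j}H(x/2^j).
\]
Suppose $H(T)\le c\log\log T$ for all $T\ge T_0>\e$, with some $0<c<\e^\gamma/2$. Choose $K$ with $x/2^K\in[T_0,2T_0)$. The weights $2^{-j}$ are positive with total mass below $1$, and $\log\log(x/2^j)\le\log\log x$. Hence $S_0(x)\le c\log\log x+O_{T_0}(1)$.

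This contradicts $\limsup_{x\to\infty}S_0(x)/\log\log x\ge\e^\gamma/2$. That bound is P\'etermann's liminf bound for $E_{-1}$, transferred through $S_0=-E_{-1}+O(1)$. The same argument applied to $-H$ and $-S_0$ gives the liminf half. Then \eqref{ep3} yields \eqref{e09b} for $r=k+\tfrac12$ with exactly the constant $\e^\gamma/2$. No residue-class variant of P\'etermann's theorem is needed.

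\textbf{Your proposed substitutes.} Neither is a proof as it stands. Quoting an unspecified residue-class form of P\'etermann's theorem and then checking its constant is precisely the unverified step.

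The jump route can be made to work, but not as you describe it. A large jump at a single odd integer gives no information about the midpoint value there, because the mean-zero property controls only averages. You would need a family of points sharing one large common jump, for example $T=nQ_y$ with $1\le n\le Q_y^3$ as in Corollary~\ref{cmid}. You would also need the count
\[
 J_{k+1/2}(nQ_y)\ge 2\sum_{\substack{d\mid Q_y,\ d\ \mathrm{odd}\\ d\ge 2k+3}}\frac1d=(\e^\gamma+o(1))\log y,
\]
obtained via Mertens' theorem, to recover the constant. None of this is carried out in your proposal.
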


\begin{proof}
Let \(N=\lfloor T\rfloor\).  From the identity for \(S_0(T)\) in
\eqref{e35}, together with
\[
 T\sum_{m\leq N}\frac1{m^2}=\zeta(2)T+O(1),
 \qquad H_N=\log T+O(1),
\]
we obtain
\begin{equation}\label{ep4}
 S_0(T)=-E_{-1}(T)+O(1).
\end{equation}
Lemma~\ref{lpet} and the relation \eqref{ep4} therefore give both
inequalities for \(S_0(T)\) in \eqref{e09b}.

For an integer shift, set \(j=m+k\).  Since
\[
 \frac1{j-k}=\frac1j+O_k(j^{-2})
 \qquad(j\geq k+1),
\]
absolute summation of the error, followed by the addition or removal
of the finitely many end terms, gives
\(S_k(T)=S_0(T)+O_k(1)\) in \eqref{ep2}.

For a half-integer shift, put \(q=2m+2k+1\).  Replacing
\(2/(q-2k-1)\) by \(2/q\) and adjusting the two endpoints introduce
terms of order \(O_k(1)\).  Hence
\begin{align*}
 S_{k+1/2}(T)
 &=2\sum_{\substack{q\leq2T\\q\ \mathrm{odd}}}
   \frac1q\psi\left(\frac{2T}{q}\right)+O_k(1)\\
 &=2S_0(2T)-S_0(T)+O_k(1),
\end{align*}
which proves
\(S_{k+1/2}(T)=2S_0(2T)-S_0(T)+O_k(1)\) in \eqref{ep3}.

To verify that this transformation preserves the constants, define
\begin{equation}\label{hfun}
 H(T)=2S_0(2T)-S_0(T).
\end{equation}
Iteration of the identity for \(H(T)\) in \eqref{hfun} gives
\begin{equation}\label{ep5}
 S_0(x)=2^{-K}S_0(x/2^K)
 +\sum_{j=1}^{K}2^{-j}H(x/2^j).
\end{equation}
Put \(C=\e^\gamma/2\).  If
\(\limsup H(T)/\log\log T<C\), choose \(c<C\) and \(T_0>\e\) so
that \(H(T)\leq c\log\log T\) for \(T\geq T_0\).  For each large
\(x\), choose \(K\) so that \(x/2^K\in[T_0,2T_0)\).  Since all
coefficients in \eqref{ep5} are positive and
\(\log\log(x/2^j)\leq\log\log x\), we get
\[
 S_0(x)\leq c\log\log x+O_{T_0}(1),
\]
contrary to P\'etermann's bounds for \(E_{-1}(T)\) in \eqref{ep1}
and the relation between \(S_0(T)\) and \(E_{-1}(T)\) in
\eqref{ep4}.  Thus
\(\limsup H(T)/\log\log T\geq C\).  Applying the same argument to
\(-S_0\) and \(-H\) proves the corresponding liminf inequality for
\(H(T)\).  The identities for \(S_k(T)\) and \(S_{k+1/2}(T)\) in
\eqref{ep2} and \eqref{ep3} now prove the stronger bounds
for \(S_r(T)\) in \eqref{e09b}.
\end{proof}

\subsection{Centered sawtooth and hyperbola truncation}

We turn to the bounds for \(S_r(T)\) in \eqref{e09} for every
nonnegative rational shift.  We begin by separating the midpoint of
each jump from its two endpoint values.  Define the centered sawtooth
\begin{equation}\label{e39}
 \psic(u)=
 \begin{cases}
  \{u\}-\dfrac12,&u\notin\Z,\\[1mm]
  0,&u\in\Z,
 \end{cases}
\end{equation}
and, for \(x\geq1\),
\begin{equation}\label{e40}
 F_r(x)=\sum_{1\leq m\leq x}\frac1m
 \psic\left(\frac{x}{m+r}\right).
\end{equation}
The value \(0\) in the definition of \(\psic\) in \eqref{e39} gives
exact mean zero on every rational orbit.

\begin{lemma}\label{lhyp}
For every fixed \(r\geq0\), uniformly for \(x\geq2\),
\begin{equation}\label{e41}
 F_r(x)=
 \sum_{m\leq\sqrt{x}}\frac1m
 \psic\left(\frac{x}{m+r}\right)+O_r(1).
\end{equation}
\end{lemma}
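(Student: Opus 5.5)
The plan is to show that the tail
\[
 \Sigma(x)=\sum_{\sqrt x<m\leq x}\frac1m\,\psic\!\left(\frac{x}{m+r}\right)
\]
is \(O_r(1)\). I would do this by decomposing the range of \(m\) according to the value of \(k=\lfloor x/(m+r)\rfloor\). For \(m>\sqrt x\) we have \(x/(m+r)<\sqrt x\), so \(k\) runs over \(0\leq k<\sqrt x\). For each \(k\), the set of \(m\) with \(\lfloor x/(m+r)\rfloor=k\) is an interval
\[
 I_k=\left(\frac{x}{k+1}-r,\ \frac xk-r\right],
\]
intersected with \((\sqrt x,x]\). The interval \(I_k\) has length \(\asymp x/k^2\), and on it \(m\asymp x/k\). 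The block \(k=0\) (that is, \(m+r>x\)) contains only \(O_r(1)\) integers, each of size \(\asymp x\), so it contributes \(O_r(1/x)\). The value \(\psic=0\) at integer points differs from the interior formula only at block endpoints, so each block is affected in at most one term, of size \(O(1/m)=O(k/x)\).

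On a fixed block \(I_k\) (with \(k\geq1\)), write \(f(m)=x/(m+r)-k-\tfrac12\). This is monotone in \(m\) and takes values in \([-\tfrac12,\tfrac12]\). I would compare \(\sum_{m\in I_k}\frac1m f(m)\) with the integral \(\int_{I_k}\frac{f(t)}{t}\,dt\). Since \(f(t)/t\) is piecewise monotone and of total variation \(O(1/m)=O(k/x)\) on \(I_k\), the discretization error per block is \(O(k/x)\). This also absorbs the endpoint corrections from \(\psic\).

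For the integral I substitute \(u=x/(t+r)\) and replace \(1/t\) by \(1/(t+r)=u/x\). The error from this replacement is \(O_r(t^{-2})\), and summed over all \(t>\sqrt x\) it is \(O_r(x^{-1/2})\). The remaining integral is
\[
 \int_k^{k+1}\Bigl(u-k-\tfrac12\Bigr)\frac{du}{u}
 =1-\Bigl(k+\tfrac12\Bigr)\log\Bigl(1+\tfrac1k\Bigr)=O(k^{-2}).
\]
This is the cancellation that makes the argument work: the sawtooth has mean zero against the nearly constant weight.

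Summing over \(1\leq k\leq\sqrt x\) gives
\[
 \Sigma(x)\ll\sum_{k\leq\sqrt x}\left(\frac{1}{k^2}+\frac{k}{x}\right)+O_r(1)=O_r(1).
\]
The two partial blocks at \(m\approx\sqrt x\) and \(m\approx x\) contribute at most \(O(1)\) each by the trivial bound. The main obstacle I expect is the bookkeeping in the discretization step: one must verify that the Euler--Maclaurin (or monotone sum-versus-integral) error is genuinely \(O(k/x)\) per block, uniformly in \(k\) up to \(\sqrt x\), and not \(O(1)\) per block. That bound holds precisely because \(m>\sqrt x\) forces \(1/m\ll k/x\). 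The identical argument applies with \(\psic\) replaced by \(\psi\), since the two differ only at the block endpoints.
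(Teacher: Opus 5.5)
Your proposal is correct and follows essentially the paper's argument: you split into blocks where \(\lfloor x/(m+r)\rfloor=k\), bound the per-block sum--integral error by \(O(k/x)\), evaluate the block integral as \(1-(k+\tfrac12)\log(1+1/k)=O(k^{-2})\), and sum over \(k\le\sqrt x\). The only differences are cosmetic: the paper swaps \(1/m\) for \(1/(m+r)\) in the sum before Euler summation on the shifted lattice, while you swap inside the integral. One justification should be corrected. The bound \(1/m\ll k/x\) on a block comes from \(m+r>x/(k+1)\), not from \(m>\sqrt x\); the restriction \(m>\sqrt x\) only caps \(k\le\sqrt x\), so that \(\sum_k k/x=O(1)\).
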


\begin{proof}
Since \(|\psic|\leq1/2\),
\[
 \sum_{m>\sqrt{x}}
 \left|\frac1m-\frac1{m+r}\right|
 \left|\psic\left(\frac{x}{m+r}\right)\right|
 \ll_r\sum_{m>\sqrt{x}}\frac1{m^2}\ll_r x^{-1/2}.
\]
It suffices to estimate
\[
 \sum_{\sqrt{x}<m\leq x}\frac1{m+r}
 \psic\left(\frac{x}{m+r}\right).
\]
Write \(u=m+r\), so that \(u\) runs through a translate of the
integer lattice.  On
\[
 \frac{x}{k+1}<u<\frac{x}{k}\qquad(k\geq1)
\]
the summand equals
\[
 f_k(u)=\frac{x}{u^2}-\frac{k+1/2}{u}.
\]
If an endpoint belongs to the translated lattice, its contribution
is zero by the definition of \(\psic\) in \eqref{e39}; hence the
lattice sum is taken over the open interval.

For a continuously differentiable function \(f\) on \([a,b]\), a
sum over a lattice with spacing one satisfies
\begin{equation}\label{e42}
 \sum_{\substack{u\in r+\Z\\a<u<b}}f(u)
 =\int_a^b f(v)\,dv+
 O\left(|f(a)|+|f(b)|+\int_a^b|f'(v)|\,dv\right).
\end{equation}
This form of the Euler summation formula has an absolute implied
constant.  To verify it, partition \([a,b]\) into unit cells
of the translated lattice.  On every complete cell, the difference
between the value at its lattice point and the integral is bounded
by the integral of \(|f'|\) over that cell; the two incomplete end
cells account for \(|f(a)|+|f(b)|\).

For \(a=x/(k+1)\) and \(b=x/k\), direct calculation gives
\begin{equation}\label{e43}
 f_k(a)=\frac{k+1}{2x},
 \qquad
 f_k(b)=-\frac{k}{2x}.
\end{equation}
Furthermore,
\[
 f_k'(v)=-\frac{2x}{v^3}+\frac{k+1/2}{v^2},
\]
and hence
\begin{align*}
 \int_a^b|f_k'(v)|\,dv
 &\leq
 2x\int_a^b\frac{dv}{v^3}
 +\left(k+\frac12\right)\int_a^b\frac{dv}{v^2}\\
 &=\frac{2k+1}{x}+\frac{k+1/2}{x}
 \ll\frac{k+1}{x}.
\end{align*}
Together with the endpoint identities for \(f_k(a)\) and \(f_k(b)\)
in \eqref{e43}, this gives
\[
 |f_k(a)|+|f_k(b)|+\int_a^b|f_k'(v)|\,dv
 \ll \frac{k+1}{x}.
\]
Moreover,
\begin{align}
 \int_{x/(k+1)}^{x/k}f_k(v)\,dv
 &=1-\left(k+\frac12\right)\log\left(1+\frac1k\right)
 \ll\frac1{k^2}.
\label{e44}
\end{align}
The last bound follows, including \(k=1\), from Taylor's formula
with remainder for \(\log(1+z)\) on \(0\leq z\leq1\).
Only \(1\leq k\leq\sqrt{x}\) occur when \(u>\sqrt{x}+r\).  Summing the
estimate for the block integral of \(f_k\) in \eqref{e44} and the
corresponding error terms
gives
\[
 \sum_{k\ll\sqrt{x}}\left(\frac1{k^2}+\frac{k+1}{x}\right)\ll1.
\]
There is at most one incomplete block at the lower endpoint
\(u=\sqrt{x}+r\); applying the Euler summation formula for \(f\) in
\eqref{e42} on the corresponding subinterval gives \(O_r(1)\).  On the
remaining interval \(x<u\leq x+r\), for which \(k=0\), there are \(O_r(1)\)
lattice points and every summand is \(O(x^{-1})\).  This piece is
also \(O_r(1)\).
This proves the truncation formula for \(F_r(x)\) in \eqref{e41}.
\end{proof}

\subsection{A bounded mean on a progression}

For rational \(r\), the required mean bound follows from exact
periodicity on progressions.  The length \(Q^3\) below is chosen for
convenience; any \(N\) with
\(N/Q\to\infty\) would suffice.

\begin{lemma}
Fix \(r=A/B\in\mathbb Q_{\geq0}\), where
\(A\in\Z_{\geq0}\), \(B\in\N\), and \((A,B)=1\).  For every
integer \(Q\geq2\),
with \(N=Q^3\),
\begin{equation}\label{e45}
 \frac1N\sum_{1\leq n\leq N}F_r(nQ)=O_r(1),
\end{equation}
where the implied constant is independent of \(Q\).
\end{lemma}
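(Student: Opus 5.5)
Apply Lemma~\ref{lhyp} at each $x=nQ$ and then exploit periodicity in $n$. By \eqref{e41},
\[
 \frac1N\sum_{n\leq N}F_r(nQ)
 =\frac1N\sum_{n\leq N}\sum_{m\leq\sqrt{nQ}}\frac1m\,
 \psic\!\left(\frac{nQ}{m+r}\right)+O_r(1).
\]
Since the truncation error in Lemma~\ref{lhyp} is uniform in $x\geq2$, it contributes $O_r(1)$ after averaging, independently of $Q$. Now interchange the sums. Each fixed $m$ with $m\leq\sqrt{NQ}=Q^2$ is counted for the $n$ with $nQ\geq m^2$, that is, for $n$ in the interval $n_0(m)\leq n\leq N$, where $n_0(m)=\lceil m^2/Q\rceil$.

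For fixed $m$, write $m+r=(Bm+A)/B$. Then
\[
 \frac{nQ}{m+r}=\frac{nQB}{Bm+A},
\]
so $n\mapsto\psic(nQB/(Bm+A))$ is periodic with period $q_m:=Bm+A$, and over a complete period it has mean zero. Indeed, with $g=\gcd(QB,q_m)$ the fractional parts $\{nQB/q_m\}$ run through the multiples of $g/q_m$, each equally often. The value $0$ from the centered convention at integers, together with the symmetry $\psic(-u)=-\psic(u)$, makes the sum over a period exactly zero. Hence any run of consecutive $n$ contributes at most $O(q_m)$ in absolute value, since the incomplete period has fewer than $q_m$ terms each bounded by $1/2$. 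Therefore
\[
 \left|\sum_{n_0(m)\leq n\leq N}\psic\!\left(\frac{nQ}{m+r}\right)\right|\leq \tfrac12 q_m\ll_r m .
\]

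Summing over $m$ gives
\[
 \frac1N\sum_{m\leq Q^2}\frac1m\cdot O_r(m)=O_r\!\left(\frac{Q^2}{N}\right)=O_r(Q^{-1})=O_r(1).
\]
This is exactly why $N=Q^3$ suffices: what is needed is $\sqrt{NQ}/N\to0$, that is, $N\gg Q$. The remaining steps are bookkeeping. First, the lower cutoff $nQ\geq2$ in Lemma~\ref{lhyp} holds automatically because $Q\geq2$. Second, for $n$ near $n_0(m)$ the condition $m\leq\sqrt{nQ}$ defines an interval of $n$, so the bound for incomplete periods applies to it.

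The only step needing genuine care is the exact mean-zero property over a period, in particular at the residues where $nQB/q_m$ is an integer. These are handled by $\psic=0$ at integers. The pairing $k\leftrightarrow q_m/g-k$ of residues then cancels the remaining terms, including the possible residue $k=q_m/(2g)$, where $\psic(1/2)=0$. All implied constants depend only on $A$ and $B$, through $q_m\leq(B+A)m$ and Lemma~\ref{lhyp}, and not on $Q$.
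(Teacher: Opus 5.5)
Your proposal is correct and follows the paper's proof essentially step for step. You truncate via Lemma~\ref{lhyp}, interchange the sums, use exact periodicity and mean zero of $n\mapsto\psic(BnQ/(Bm+A))$ to bound each $m$-contribution by $O_r(1)$, and divide the resulting $O_r(N+Q^2)$ by $N=Q^3$. The only cosmetic differences are that you work with the unreduced period $Bm+A$ and verify vanishing over a period by the pairing $k\leftrightarrow q_m/g-k$, whereas the paper passes to the reduced period $(Bm+A)/g_m$ and sums $j/q_m-\tfrac12$ explicitly.
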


\begin{proof}
By Lemma~\ref{lhyp}, specifically the truncation formula for \(F_r(x)\)
in \eqref{e41},
\begin{align}
 \sum_{n\leq N}F_r(nQ)
 &=
 \sum_{n\leq N}
 \sum_{m\leq\sqrt{nQ}}\frac1m
 \psic\left(\frac{BnQ}{Bm+A}\right)+O_r(N).
\label{e46}
\end{align}
Put \(M=\lfloor\sqrt{NQ}\rfloor\).  After reversing the order of
summation, the inner \(n\)-sum associated with a fixed \(m\leq M\)
is taken over the interval
\[
 \max\left\{1,\left\lceil\frac{m^2}{Q}\right\rceil\right\}
 \leq n\leq N.
\]

Let
\[
 g_m=(BQ,Bm+A),\qquad q_m=\frac{Bm+A}{g_m}.
\]
The sequence
\[
 n\longmapsto
 \psic\left(\frac{BnQ}{Bm+A}\right)
\]
has period \(q_m\).  Since \(BQ/g_m\) is coprime to \(q_m\), one
complete period is a permutation of
\(\psic(j/q_m)\), \(0\leq j<q_m\).  Hence
\begin{equation}\label{e47}
 \sum_{j=0}^{q_m-1}\psic\left(\frac{j}{q_m}\right)
 =\sum_{j=1}^{q_m-1}\left(\frac{j}{q_m}-\frac12\right)=0.
\end{equation}
Decompose any interval of consecutive \(n\)'s into complete periods
and at most two incomplete end pieces.  The mean-zero identity for
\(\psic\) in \eqref{e47} shows
that the complete periods contribute zero, while the two end pieces
contain fewer than \(2q_m\) terms of absolute value at most \(1/2\).
The sum over the interval is \(O(q_m)\).  Its contribution to the
double sum for \(F_r(nQ)\) in \eqref{e46}, including
the factor \(1/m\), is
\[
 O\left(\frac{q_m}{m}\right)
 =O\left(\frac{Bm+A}{m g_m}\right)=O_r(1).
\]
Summing this estimate for \(m\leq M\) gives
\[
 \sum_{n\leq N}F_r(nQ)=O_r(N+M).
\]
Since \(M\leq\sqrt{NQ}=Q^2\) and \(N=Q^3\), division by \(N\)
proves the mean estimate for \(F_r(nQ)\) in \eqref{e45}.
\end{proof}

\begin{corollary}\label{cmid}
For every fixed \(r\in\mathbb Q_{\geq0}\), there is a constant
\(K_r>0\) such that, for every \(Q\geq2\), one
can find \(n_+,n_-\in[1,Q^3]\cap\Z\) satisfying
\begin{equation}\label{e48}
 F_r(n_+Q)\geq-K_r,
 \qquad
 F_r(n_-Q)\leq K_r.
\end{equation}
\end{corollary}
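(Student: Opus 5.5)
The corollary follows from the mean bound \eqref{e45} by a pigeonhole argument: an average cannot lie strictly below every term or strictly above every term. I take \(K_r\) to be the implied constant in \eqref{e45}, which is independent of \(Q\). That lemma then gives, for every integer \(Q\geq2\) with \(N=Q^3\),
\[
 -K_r\leq\frac1N\sum_{1\leq n\leq N}F_r(nQ)\leq K_r .
\]

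For the first inequality in \eqref{e48}, suppose \(F_r(nQ)<-K_r\) held for every integer \(n\in[1,Q^3]\). Averaging over these \(N\) values would give a mean strictly less than \(-K_r\), contradicting the lower bound above. Hence some \(n_+\in[1,Q^3]\cap\Z\) satisfies \(F_r(n_+Q)\geq-K_r\).

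The second inequality is the same argument with the roles reversed. If \(F_r(nQ)>K_r\) for all such \(n\), the mean would exceed \(K_r\), which is impossible. So some \(n_-\) satisfies \(F_r(n_-Q)\leq K_r\).

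The only point needing care is that \(K_r\) does not depend on \(Q\). This is exactly the uniformity asserted in \eqref{e45}, so there is no real obstacle. The argument is a direct consequence of that lemma.
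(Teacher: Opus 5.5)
Your proposal is correct and is essentially the paper's own argument. The paper likewise takes $K_r$ to be the $Q$-independent bound on the average in \eqref{e45} and observes that some term is at least the average and some term is at most it.
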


\begin{proof}
The mean estimate for \(F_r(nQ)\) in \eqref{e45} implies that the
average of \(F_r(nQ)\), \(1\leq n\leq Q^3\), has absolute value at most a
constant \(K_r\) independent of \(Q\).  At least one term is no
smaller than this average, and at least one term is no larger.  These
two terms satisfy the inequalities for \(F_r(n_+Q)\) and
\(F_r(n_-Q)\) in \eqref{e48}.
\end{proof}

\subsection{Simultaneous jumps}

For every fixed \(r\geq0\), the function \(S_r\) has only finitely
many discontinuities in each bounded interval.  Define its left limit by
\begin{equation}\label{e49a}
 S_r(T^-)=\lim_{U\uparrow T}S_r(U).
\end{equation}
The limit exists because near a fixed \(T\) only finitely many
summands occur, and each summand has a left limit.  If
\(r=A/B\in\mathbb Q_{\geq0}\), then
\begin{equation}\label{e49}
 \operatorname{Disc}(S_r)\subset B^{-1}\mathbb Z.
\end{equation}
Indeed, a sawtooth discontinuity belonging to the \(m\)-th summand
occurs only at
\(T=k(m+r)=k(Bm+A)/B\), while a change of the summation endpoint
occurs at an integer.  Both types of points belong to
\(B^{-1}\mathbb Z\).

For \(T\in\N\), let
\begin{equation}\label{e49b}
 \mathcal J_r(T)=
 \left\{m<T:\frac{T}{m+r}\in\Z\right\},
 \qquad
 J_r(T)=\sum_{m\in\mathcal J_r(T)}\frac1m.
\end{equation}

\begin{lemma}
For every \(r\geq0\) and \(T\in\N\), with \(F_r\) and \(J_r\)
defined in \eqref{e40} and \eqref{e49b}, respectively,
\begin{align}
 S_r(T)
 &=F_r(T)-\frac12J_r(T)
   -\frac{\boldsymbol 1_{\{r=0\}}}{2T},
\label{e50}\\
 S_r(T^-)
 &=F_r(T)+\frac12J_r(T)
   -\frac1T\psic\left(\frac{T}{T+r}\right).
\label{e51}
\end{align}
In particular,
\begin{align}
 S_r(T)&\leq F_r(T)-\frac12J_r(T),
\label{e52}\\
 S_r(T^-)&\geq F_r(T)+\frac12J_r(T)-\frac1{2T}.
\label{e53}
\end{align}
\end{lemma}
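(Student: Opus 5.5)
The plan is to compare, term by term, the sawtooth sum \(S_r\) in \eqref{e07} with the centered sum \(F_r\) in \eqref{e40} at an integer \(T\). Both sums run over \(1\leq m\leq T\). The only difference between \(\psi\) and \(\psic\) is at integer arguments, where \(\psi=-\tfrac12\) and \(\psic=0\). So the work is to locate the indices \(m\) for which \(T/(m+r)\in\Z\), and to keep track of the endpoint \(m=T\) separately when taking the left limit.

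For \eqref{e50}, I split the range \(1\leq m\leq T\) into \(m<T\) and \(m=T\). For \(m<T\), the summand of \(S_r(T)\) equals that of \(F_r(T)\) unless \(m\in\mathcal J_r(T)\). In that case it is \(-\tfrac1{2m}\) instead of \(0\), and these terms together give \(-\tfrac12J_r(T)\). For \(m=T\), the argument is \(T/(T+r)\in(0,1]\), which is an integer only when \(r=0\). If \(r>0\), the two summands agree. If \(r=0\), the argument equals \(1\), which produces the extra term \(-\tfrac1{2T}\). This gives \eqref{e50}.

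For \eqref{e51}, I take \(U\uparrow T\) with \(T-1<U<T\). The summation range is then exactly \(1\leq m\leq T-1\), so the index \(m=T\) drops out. For each fixed \(m<T\), I use the one-sided limit of the sawtooth:
\[
 \lim_{v\uparrow y}\psi(v)=
 \begin{cases}
  \tfrac12, & y\in\Z,\\
  \psic(y), & y\notin\Z,
 \end{cases}
\]
which follows from \(\lim_{v\uparrow y}\{v\}=1\) when \(y\in\Z\). The map \(U\mapsto U/(m+r)\) is continuous and increasing, so applying this limit to each of the finitely many summands gives
\[
 S_r(T^-)=\sum_{m<T}\frac1m\psic\left(\frac T{m+r}\right)+\frac12J_r(T).
\]
The sum of \(\psic\) terms over \(m<T\) is \(F_r(T)\) minus its \(m=T\) term, namely \(\tfrac1T\psic(T/(T+r))\). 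This is \eqref{e51}.

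The inequalities then follow directly. For \eqref{e52}, I drop the nonpositive term \(-\boldsymbol 1_{\{r=0\}}/(2T)\) from \eqref{e50}. For \eqref{e53}, I use \(|\psic|\leq\tfrac12\) in \eqref{e51}. The only point needing care is the boundary index \(m=T\): it is present in \(S_r(T)\) but absent from the left limit, and it is an integer jump point only for \(r=0\). The degenerate case \(T=1\), where \(\mathcal J_r(1)=\emptyset\) and only this boundary term appears, is covered by the same computation.
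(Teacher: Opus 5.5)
Your proposal is correct and follows the paper's proof essentially verbatim: you compare \(\psi\) and \(\psic\) term by term for \(m<T\), and you handle the boundary index \(m=T\) separately (present at \(T\), an integer jump only when \(r=0\), absent from the left limit). Your explicit one-sided limit \(\lim_{v\uparrow y}\psi(v)=\tfrac12\) at integers simply spells out the step the paper states in one line.
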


\begin{proof}
For \(m<T\), the summands in \(S_r(T)\) and \(F_r(T)\) agree unless
\(T/(m+r)\) is an integer.  At such an index their difference is
\(-1/(2m)\), while the corresponding left limit differs from
\(F_r(T)\) by \(+1/(2m)\).  The term \(m=T\) is present in both
sums at the right endpoint.  It contributes an additional
\(-1/(2T)\) precisely when \(r=0\), proving the identity for
\(S_r(T)\) in \eqref{e50}.  It is absent from the left limit, whereas
its contribution to \(F_r(T)\) is
\(T^{-1}\psic(T/(T+r))\), which proves the identity for
\(S_r(T^-)\) in \eqref{e51}.  The inequalities for \(S_r(T)\) and
\(S_r(T^-)\) in \eqref{e52} and \eqref{e53} follow from
\(|\psic|\leq1/2\).
\end{proof}

\begin{proof}[Proof of Theorem~\ref{tom}]
Write \(r=A/B\) in lowest terms, where
\(A\in\Z_{\geq0}\) and \(B\in\N\).
Let \(y>A+B\) tend to infinity through the integers, and put
\begin{equation}\label{qdef}
 Q=Q_y=\operatorname{lcm}(1,2,\ldots,y),\qquad N=Q^3.
\end{equation}
If \(T=nQ\), \(1\leq n\leq N\), and
\begin{equation}\label{ldef}
 1\leq m\leq L_y:=\left\lfloor\frac{y-A}{B}\right\rfloor,
\end{equation}
then \(Bm+A\leq y\), hence \(Bm+A\mid Q\), and
\[
 \frac{T}{m+r}=\frac{BnQ}{Bm+A}\in\Z.
\]
For \(y\geq3\), the integer \(Q\) is divisible by
\(\operatorname{lcm}(y-1,y)=y(y-1)\).  Thus the quantity \(L_y\)
in \eqref{ldef} satisfies \(L_y\leq y<Q\leq T\), and every such
\(m\) belongs to \(\mathcal J_r(T)\) as defined in
\eqref{e49b}, including the strict restriction \(m<T\).
Consequently there is a constant \(C_r>0\), independent of
\(y\) and \(n\), such that, uniformly for \(1\leq n\leq N\),
\begin{equation}\label{e54}
 J_r(nQ)\geq H_{L_y}\geq\log y-C_r.
\end{equation}
Here the second inequality follows from
\(L_y=y/B+O_r(1)\) and \(H_n=\log n+O(1)\).

Choose \(n_-\) from Corollary~\ref{cmid} with
\(F_r(n_-Q)\leq K_r\), and set \(T_-=n_-Q\).
By the right endpoint estimate for \(S_r(T)\) in \eqref{e52} and the
lower bound for \(J_r(nQ)\) in \eqref{e54},
\begin{equation}\label{e55}
 S_r(T_-)
 \leq-\frac12\log y+C_r,
\end{equation}
after enlarging \(C_r\) if necessary.
Likewise choose \(n_+\) with
\(F_r(n_+Q)\geq-K_r\), and put \(T_+=n_+Q\).
The left limit estimate for \(S_r(T^-)\) in \eqref{e53} and the lower
bound for \(J_r(nQ)\) in \eqref{e54} give
\begin{equation}\label{e56}
 S_r(T_+^-)
 \geq\frac12\log y-C_r.
\end{equation}
By the description of \(\operatorname{Disc}(S_r)\) in \eqref{e49},
the interval
\((T_+-1/B,T_+)\) contains no discontinuity of \(S_r\).  For
\(y>B\), the definition of \(S_r(T^-)\) in \eqref{e49a} and the
lower bound for \(S_r(T_+^-)\) in \eqref{e56} allow us to
choose
\(U_+\in(T_+-1/y,T_+)\) so that
\begin{equation}\label{e56a}
 S_r(U_+)\geq S_r(T_+^-)-1
 \geq\frac12\log y-C_r-1.
\end{equation}
Put \(U_-=T_-\).

To complete the proof, we compare \(y\) with the selected points.
For \(Q_y\) defined in \eqref{qdef}, the classical Chebyshev bounds
for the second Chebyshev function imply
\[
 \log Q_y=\sum_{p^a\leq y}\log p\asymp y;
\]
see Hardy and Wright \cite[Chapter~22]{HW08}.  For all sufficiently
large \(y\),
\[
 \frac Q2\leq U_\pm\leq Q^4,
\]
because \(T_\pm=n_\pm Q\), \(1\leq n_\pm\leq Q^3\), and
\(U_+\in(T_+-1/y,T_+)\).  Therefore, uniformly for the selected points,
\begin{equation}\label{e57}
 \log\log U_\pm=\log y+O(1).
\end{equation}
The estimates for \(S_r(U_-)\) and \(S_r(U_+)\) in \eqref{e55} and
\eqref{e56a}, together with the comparison for \(\log\log U_\pm\) in
\eqref{e57}, prove the
two-sided bounds \eqref{e09} after
division by the corresponding \(\log\log U_\pm\) and passage to
\(y\to\infty\).  Since \(Q_y\to\infty\), both selected sequences
\(U_-\) and \(U_+\) tend to infinity.  Lemma~\ref{lshf} supplies the
stronger estimates for \(S_r(T)\) in \eqref{e09b} for the integer and
half-integer shifts, completing the proof of Theorem~\ref{tom}.
\end{proof}

\begin{proof}[Proof of Theorem~\ref{som}]
Put \(T=c\lambda/\pi\).  After dividing the spectral reduction for
\(R_M(\lambda)\) in \eqref{e14} by
\(\lambda^d\log\log\lambda\), the \(O_M(\lambda^d)\) term tends to
zero.  The negative sign in the reduction for \(R_M(\lambda)\) in
\eqref{e14}
exchanges the limsup and liminf.  Applying Theorem~\ref{tom} and using
\(\log\log T=\log\log\lambda+o(1)\) gives the inequalities
\eqref{e05}, with
\[
 \frac{2L(c/\pi)^d}{(d-1)!}\cdot\frac{\e^\gamma}{2}
 =\e^\gamma\kappa_M.
\]
The proof is completed.
\end{proof}

\pdfbookmark[1]{Acknowledgements}{acks}
\section*{Acknowledgements}
\noindent S.-C. Mao is partially supported by the China Postdoctoral Science
Foundation (Grant No.~2026M793367).  Y. Zhang has received funding from the European
Research Council (ERC) under the European Union's Horizon 2020 research and
innovation programme (grant agreement GEOSUB, No.~945655).
\medskip

\bigskip

\noindent\textbf{Competing interests.}
The authors declare no competing interests.

\bigskip
\noindent\textbf{Data availability.}
No datasets were generated or analyzed in this study.

\bigskip
\noindent\textbf{Declaration of AI Use.}
During the preparation of this manuscript, the authors used
OpenAI GPT-5.6 for English language editing and stylistic improvements, including grammar, wording, and readability. All mathematical content was developed and verified by the authors, who take full responsibility for the final manuscript.

\pdfbookmark[1]{References}{refs}

\medskip
\mbox{}\\
Sheng-Chen Mao (\textit{Corresponding author})\\
School of Mathematics and Statistics \\
Lanzhou University \\
No. 222 Tianshui South Road \\
Lanzhou 730000, P.R. China \\
\noindent
\begin{tabular}{@{}ll@{}}
{E-Mails:}&{\ttfamily maoshengchen@lzu.edu.cn; maosci@163.com }
\end{tabular}
 
\bigskip\noindent
Ye Zhang  \\
SISSA  \\
via Bonomea, 265 \\
34136 Trieste, Italy \\
\noindent
\begin{tabular}{@{}ll@{}}
{E-Mails:}&{\ttfamily yezhang@sissa.it; zhangye0217@gmail.com }
\end{tabular}

\end{document}